\def\arxivversion{}   
\ifdefined\arxivversion
  \documentclass{amsart}
\else
  \documentclass{mcom-l}
\fi

\ifdefined\arxivversion
    \usepackage[letterpaper,margin=1.25in,includefoot]{geometry}
\fi

\usepackage{amssymb}
\usepackage{graphicx}

\usepackage{algorithm}
\usepackage{algpseudocodex}
\usepackage{multirow}
\usepackage{xcolor}
\usepackage[numbers]{natbib}
\usepackage{url}
\usepackage{aliascnt}
\usepackage{mathtools}
\ifdefined\arxivversion
  \definecolor{linkblue}{RGB}{0,84,147}
  \definecolor{citegreen}{RGB}{0,122,94}
  \definecolor{urlmaroon}{RGB}{128,0,0}
  \usepackage{hyperref}
  \hypersetup{
    colorlinks = true,
    linkcolor  = linkblue,
    citecolor  = citegreen,
    urlcolor   = urlmaroon,
    pdftitle   = {Optimal Monte Carlo integration on matrix manifolds},
    pdfauthor  = {Aleksei G. Sorokin, Yihan Guo, Lek-Heng Lim},
  }
\fi
\usepackage[capitalise,noabbrev,nameinlink]{cleveref}  

\newcommand{\calO}{{\mathcal{O}}}
\newcommand{\bbR}{{\mathbb{R}}}
\newcommand{\bbC}{{\mathbb{C}}}
\newcommand{\bbH}{{\mathbb{H}}}
\newcommand{\bbN}{{\mathbb{N}}}
\newcommand{\bbF}{{\mathbb{F}}}
\newcommand{\bbE}{{\mathbb{E}}}
\newcommand{\calM}{{\mathcal{M}}}
\newcommand{\calG}{{\mathcal{G}}}
\newcommand{\calB}{{\mathcal{B}}}
\newcommand{\calZ}{{\mathcal{Z}}}
\newcommand{\tcalZ}{{\widetilde{\mathcal{Z}}}}
\newcommand{\calX}{{\mathcal{X}}}
\DeclareMathOperator{\diag}{diag}
\DeclareMathOperator{\trace}{trace}
\newcommand{\llvert}{\left\lvert}
\newcommand{\rrvert}{\right\rvert}
\newcommand{\llVert}{\left\lVert}
\newcommand{\rrVert}{\right\rVert}
\newcommand{\llangle}{\left\langle}
\newcommand{\rrangle}{\right\rangle}
\newcommand{\simiid}{\overset{\mathrm{IID}}{\sim}}
\DeclareMathOperator*{\argmin}{argmin}

\newcommand{\tw}{{\widetilde{w}}}
\newcommand{\tK}{{\widetilde{K}}}
\newcommand{\hw}{{\widehat{w}}}
\newcommand{\tgamma}{{\widetilde{\gamma}}}
\newcommand{\hf}{{\widehat{f}}}

\newcommand{\qqtqq}[1]{\qquad\mathrm{#1}\qquad}

\newcommand{\WCEm}[4]{\mathrm{WCE}_{{#1}}\left(#2,#3,#4\right)}

\newcommand{\h}{{\scriptscriptstyle\mathsf{H}}}

\newcommand{\s}{{\scriptscriptstyle\mathsf{S}}}
\newcommand{\la}{{\scriptscriptstyle\mathsf{L}}}
\let\O\undefined
\DeclareMathOperator{\O}{O}
\DeclareMathOperator{\U}{U}
\DeclareMathOperator{\V}{V}
\DeclareMathOperator{\Sp}{Sp}

\DeclareMathOperator{\Gr}{Gr}
\DeclareMathOperator{\LGr}{LGr}

\DeclareMathOperator{\spn}{span}

\DeclareMathOperator{\Flag}{Flag}

\DeclareMathOperator{\unif}{\textsc{Unif}}

\newtheorem{theorem}{Theorem}[section]
\newaliascnt{lemma}{theorem}

\aliascntresetthe{lemma}
\newaliascnt{corollary}{theorem}
\newtheorem{corollary}[corollary]{Corollary}
\aliascntresetthe{corollary}
\theoremstyle{remark}  
\newaliascnt{remark}{theorem}
\newtheorem{remark}[remark]{Remark}
\aliascntresetthe{remark}
\crefname{theorem}{theorem}{theorems}
\Crefname{theorem}{Theorem}{Theorems}
\crefname{lemma}{lemma}{lemmas}
\Crefname{lemma}{Lemma}{Lemmas}
\crefname{corollary}{corollary}{corollaries}
\Crefname{corollary}{Corollary}{Corollaries}
\crefname{remark}{remark}{remarks}
\Crefname{remark}{Remark}{Remarks}

\begin{document}

\title{Optimal Monte Carlo integration on matrix manifolds}


\ifdefined\arxivversion
    \author[Aleksei G. Sorokin]{Aleksei G. Sorokin}
    \author[Yihan Guo]{Yihan Guo}
    \author[Lek-Heng Lim]{Lek-Heng Lim}
    \thanks{Corresponding author: Aleksei G. Sorokin, \href{mailto:sorokin@uchicago.edu}{\nolinkurl{sorokin@uchicago.edu}}}
    \thanks{All authors: Department of Statistics, University of Chicago, Chicago, IL 60637, USA}
    \thanks{Current address of Yihan Guo: Department of Mathematics, Purdue University, West Lafayette, IN 47907, USA}
\else
    \author{Aleksei G. Sorokin}
    \address{Department of Statistics, University of Chicago, Chicago, IL 60637, USA}
    \email{sorokin@uchicago.edu}
    \thanks{}
    
    \author{Yihan Guo}
    \address{Department of Statistics, University of Chicago, Chicago, IL 60637, USA}
    \curraddr{Department of Mathematics, Purdue University, West Lafayette, IN, 47907, USA}
    \email{guo1009@purdue.edu}
    \thanks{}
    
    \author{Lek-Heng Lim}
    \address{Department of Statistics, University of Chicago, Chicago, IL 60637, USA}
    \email{lekheng@uchicago.edu}
    \thanks{}
\fi

\subjclass[2020]{
    Primary 
        65D32, 
        65C05; 
    Secondary 
        46E22, 
        14M15, 
        53C30, 
        65D12} 

\date{}

\dedicatory{}


\begin{abstract}
    This article develops optimal order (up to logarithmic factors) weighted cubature rules for Monte Carlo numerical integration in Sobolev reproducing kernel Hilbert spaces over common matrix manifolds. Specifically, we consider matrix models of the Stiefel, Grassmann, flag, and Lagrangian Grassmann manifolds each over either real numbers $\bbR$, complex numbers $\bbC$, or quaternions $\bbH$. The proposed cubature rules use evaluation points which are independent and identically distributed uniform on the manifold. The weights are chosen to be optimal with respect to the reproducing kernel Hilbert space worst-case error, under the constraint that the weights sum to one. Crucially, this constraint ensures optimal weights may be found without having to evaluate the kernel mean embedding (the integral of the kernel), which is often difficult to compute. We consider computable Mat\'ern kernels of Frobenius distances between matrices which reproduce the Sobolev reproducing kernel Hilbert spaces with equivalent norms. Open-source numerical experiments validate our theoretical development, including an application to simultaneous approximation of intrinsic volumes in the Cauchy--Kubota formula.  We also present preliminary empirical results beyond independent points including transformations of low-discrepancy quasi-Monte Carlo points from the unit cube and jointly optimized points and weights.
\end{abstract}

\maketitle

\section{Introduction} 

\subsection{Motivation}

Numerical integration methods are foundational to statistical computation. Stiefel, Grassmann, flag, and Lagrangian Grassmann manifolds are important generalizations of widely studied manifolds which have recently been identified with submanifolds of matrix spaces \citep{guo2025eigen,LimYe2024MinimalEquivariant}. This article uses these matrix models to develop optimal order (up to logarithmic factors) cubature routines in Sobolev reproducing kernel Hilbert spaces for numerical integration on these manifolds. Despite their wide-ranging relevance, existing work on numerical integration over these matrix manifolds has largely focused on special cases such as the Grassmann manifold \citep{breger2017quasi} and the unit sphere  \citep{brauchart2012quasi,brauchart2014qmc}, or on general closed manifolds without exploiting matrix structure \citep{ehler2019optimal}. Our generalized approach improves upon existing methods to provide practical, optimal, and theoretically justified cubature schemes on these manifolds.

Numerical integration on Stiefel, Grassmann, flag, and Lagrangian Grassmann manifolds plays a pivotal role in a wide range of modern applications across computational mathematics, statistics, physics, machine learning, and convex geometry. In random matrix theory, integrals over these manifolds naturally arise in the study of eigenvalue distributions and invariant measures \citep{guionnet2002large}. In statistics, such integrals are central to the computation of distribution functions for various test statistics. This is especially relevant to the evaluation of marginal likelihoods and Bayesian model evidence under Wishart priors for covariance matrices, which requires integration over possible eigenstructure manifolds \citep{james1964distributions,muirhead1982aspects}. In physics and quantum information theory, these manifolds appear in state space decompositions, quantum tomography, entanglement analysis, and quantum sensing, where key quantities reduce to integrals over manifolds associated with random symmetric states \citep{bengtsson2006geometry,OszmaniecAugusiakGogolinKolodynskiAcinLewenstein2016}. In signal processing and machine learning, these manifolds are widely used to represent subspaces and filtrations, providing powerful tools for dimensionality reduction and statistical analysis on nonlinear data spaces \citep{edelman1998geometry}. 

In convex geometry, important quantities such as intrinsic volumes may be formulated as integrals over random projections onto nested subspaces characterized via these manifolds. In \Cref{sec:Kubota} we will demonstrate the utility of our proposed cubature schemes by simultaneously approximating multiple intrinsic volumes in the Cauchy--Kubota formula using a single weighted cubature rule  \citep{Hadwiger1957,SchneiderWeil2008,klain.intro_geometric_prob,kubota1925konvex}. We emphasize that such simultaneous approximations will use the same points and weights for each intrinsic volume estimate, which would not be possible using existing cubature schemes on special cases of these manifolds such as the unit spheres \citep{brauchart2012quasi,brauchart2014qmc} or Grassmann manifolds \citep{breger2017quasi}.

\subsection{Matrix manifolds} \label{sec:matrix_manifolds}

Let $\O(n)$ be the orthogonal group in $\bbR^{n \times n}$, let $\U(n)$ be the unitary group in $\bbC^{n \times n}$, and let $\Sp(n)$ be the quaternionic unitary group in $\bbH^{n \times n}$. We denote by $X^\top$, $X^\dagger$, and $X^\h$ the transpose, conjugate transpose, and quaternionic conjugate transpose respectively, and sometimes use $X^*$ to avoid distinguishing between these operations. Note that $\Sp(n)$ may be identified with the space of all $2n \times 2n$ complex matrices $Q$ which are both unitary, $Q^\dagger Q = I_{2n}$, and symplectic, $Q^\top J_{2n} Q = J_{2n}$ where $J_{2n} = \begin{bsmallmatrix} 0 & -I_n \\ I_n & 0 \end{bsmallmatrix}$. Specifically, we identify the quaternionic matrix $X + Y \mathsf{j} \in \bbH^{a \times b}$ with the complex matrix $\begin{bsmallmatrix} X & Y \\ -\overline{Y} & \overline{X} \end{bsmallmatrix} \in \bbC^{2a \times 2b}$ where $\overline{X},\overline{Y} \in \bbC^{a \times b}$ are the conjugates of $X,Y \in \bbC^{a \times b}$ respectively. Let us also use the notations $Q^\s := - J_{2n} Q^\top J_{2n}$ and $Q^\la := I_{n,n} Q^\h I_{n,n}$ where $I_{a,b} := \diag(I_a,-I_b)$. We use the notation $q_1,\dots,q_n$ for the columns of $Q$.

In this article we will consider the Stiefel ($\V$), Grassmann ($\Gr$), flag ($\Flag$), and Lagrangian Grassmann ($\LGr$) manifolds over $\bbR$, $\bbC$, and $\bbH$ as defined in \Cref{tab:manifolds}. The submanifold matrix representations were originally proposed in \citep{guo2025eigen,LimYe2024MinimalEquivariant} and recently used in \cite{LaiLimYe2025EDDManifoldOptimization,LimLuYe2025ProductsGrassmannians}. Beyond the projector model of the Grassmannian used in \citep{breger2017quasi}, our use of these submanifold matrix representations is, to the best of our knowledge, novel within the literature regarding Monte Carlo integration on closed manifolds \citep{brauchart2014qmc,breger2018points,breger2017quasi,briol2015probabilistic,graef2013efficient}. The definitions of the Grassmann manifolds use $\Delta := I_{k,n-k} = \diag(I_k,-I_{n-k})$ where $1 \leq k < n$. The definitions of the flag manifolds use $\Delta_a := \diag(a_1 I_{k_1-k_0},\dots,a_{p+1} I_{k_{p+1}-k_p})$ where $a_1,\dots,a_{p+1} \in \bbR$ are any distinct arbitrary real constants, $0 =: k_0 < k_1 < \cdots < k_p < k_{p+1} := n$, and $n_j = k_j-k_{j-1}$ for $1 \leq j \leq p+1$. The Grassmann manifold is a special case of a flag manifold.  

The intrinsic dimension $d$ of the manifold is the dimension of the top group minus the dimension of the stabilizer group in the homogeneous quotient space using $\dim(\O(n)) = n(n-1)/2$, $\dim(\U(n)) = n^2$, and $\dim(\Sp(n)) = 2n^2+n$. The ambient dimension $r$ is the number of real valued scalars used to represent the matrix where for quaternionic matrices we take this to be the number of real scalars in their complex representation.

For a manifold $\calM$, let $\mu$ denote the uniform (Haar) measure on $\calM$ satisfying $\mu(\calM) = 1$. The definitions of $\mu$ on matrix submanifolds $\calM$ depend on the uniform measures $\unif(\calG)$ where $\calG \in \{\O(n),\U(n),\Sp(n)\}$ is the relevant group. The QR and eigen-decompositions may both be used to generate uniform $Q \sim \unif(\O(n))$ or $Q \sim \unif(\U(n))$. The singular value decomposition may be used to generate uniform $Q \sim \unif(\Sp(n))$. See \citep{guo2025eigen} for details on generating uniform points on these matrix submanifolds. 

\begin{table}[htbp]
    \centering
    \resizebox{\textwidth}{!}{%
    \begin{tabular}{l l l l}
        manifold  & matrix submanifold $\calM$ & \multicolumn{2}{l}{$X \sim \mu$} \\
        \hline 
        $\V(k,\bbR^n)$ & $\left\{X \in \bbR^{n \times k}: X^\top X = I\right\}$ & $[q_1 | \cdots | q_k] \in \bbR^{n \times k}:$ & $Q = [q_1 | \cdots | q_n] \sim \unif(\O(n))$ \\
        $\V(k,\bbC^n)$ & $\left\{X \in \bbC^{n \times k}: X^\dagger X = I\right\}$ & $[q_1 | \cdots | q_k] \in \bbC^{n \times k}:$ & $Q = [q_1 | \cdots | q_n] \sim \unif(\U(n))$ \\
        $\V(k,\bbH^n)$ & $\left\{X \in \bbH^{n \times k}: X^\h X = I\right\}$ & $[q_1 | \cdots | q_k] \in \bbH^{n \times k}:$ & $Q = [q_1 | \cdots | q_n] \sim \unif(\Sp(n))$ \\
        $\Gr(k,\bbR^n)$ & $\left\{Q \Delta Q^\top \in \bbR^{n \times n}: Q \in \O(n)\right\}$ & $Q \Delta Q^\top \in \bbR^{n \times n}:$ & $Q \sim \unif(\O(n))$ \\
        $\Gr(k,\bbC^n)$ & $\left\{Q \Delta Q^\dagger \in \bbC^{n \times n}: Q \in \U(n)\right\}$ & $Q \Delta Q^\dagger \in \bbC^{n \times n}:$ & $Q \sim \unif(\U(n))$ \\
        $\Gr(k,\bbH^n)$ & $\left\{Q \Delta Q^\h \in \bbH^{n \times n}: Q \in \Sp(n)\right\}$ & $Q \Delta Q^\h \in \bbH^{n \times n}:$ & $Q \sim \unif(\Sp(n))$ \\
        $\Flag(k_1,\dots,k_p,\bbR^n)$ & $\left\{Q \Delta_a Q^\top \in \bbR^{n \times n}: Q \in \O(n)\right\}$ & $Q \Delta_a Q^\top \in \bbR^{n \times n}:$ & $Q \sim \unif(\O(n))$ \\
        $\Flag(k_1,\dots,k_p,\bbC^n)$ & $\left\{Q \Delta_a Q^\dagger \in \bbC^{n \times n}: Q \in \U(n)\right\}$ & $Q \Delta_a Q^\dagger \in \bbC^{n \times n}:$ & $Q \sim \unif(\U(n))$ \\
        $\Flag(k_1,\dots,k_p,\bbH^n)$ & $\left\{Q \Delta_a Q^\h \in \bbH^{n \times n}: Q \in \Sp(n)\right\}$ & $Q \Delta_a Q^\h \in \bbH^{n \times n}:$ & $Q \sim \unif(\Sp(n))$ \\
        $\LGr(\bbR^{2n})$ & $\left\{Q Q^\top \in \bbC^{n \times n}: Q \in \U(n)\right\}$ & $Q Q^\top \in \bbC^{n \times n}:$ & $Q \sim \unif(\U(n))$ \\
        $\LGr(\bbC^{2n})$ & $\left\{Q Q^\la \in \bbC^{2n \times 2n}: Q \in \Sp(n)\right\}$ & $Q Q^\la \in \bbC^{2n \times 2n}:$ & $Q \sim \unif(\Sp(n))$ \\
        $\LGr(\bbH^{2n})$ & $\left\{Q Q^\s \in \bbC^{2n \times 2n}: Q \in \U(2n)\right\}$ & $Q Q^\s \in \bbC^{2n \times 2n}:$ & $Q \sim \unif(\U(2n))$ \\
        \hline 
        \hline 
        manifold &  homogeneous & $d$ & $r$\\
        \hline 
        $\V(k,\bbR^n)$ & $\O(n) / \O(n-k)$ & $nk-k(k+1)/2$ & $nk$ \\ 
        $\V(k,\bbC^n)$ & $\U(n) / \U(n-k)$ & $2nk - k^2$ & $2nk$  \\ 
        $\V(k,\bbH^n)$ & $\Sp(n) / \Sp(n-k)$ & $4nk-2k^2+k$ & $8nk$  \\ 
        $\Gr(k,\bbR^n)$ & $\O(n) / (\O(k) \times \O(n-k))$ & $k(n-k)$ & $n^2$  \\ 
        $\Gr(k,\bbC^n)$ & $\U(n) / (\U(k) \times \U(n-k))$ & $2k(n-k)$ & $2n^2$ \\ 
        $\Gr(k,\bbH^n)$ & $\Sp(n) / (\Sp(k) \times \Sp(n-k))$ & $4k(n-k)$ & $8n^2$ \\ 
        $\Flag(k_1,\dots,k_p,\bbR^n)$ & $\O(n) / (\O(k_1) \times \cdots \times \O(n-k_p))$ & $\sum_{1 \leq i < j \leq p+1} n_i n_j$ & $n^2$ \\ 
        $\Flag(k_1,\dots,k_p,\bbC^n)$ & $\U(n) / (\U(k_1) \times \cdots \times \U(n-k_p))$ & $2\sum_{1 \leq i < j \leq p+1} n_i n_j$ & $2 n^2$\\ 
        $\Flag(k_1,\dots,k_p,\bbH^n)$ & $\Sp(n) / (\Sp(k_1) \times \cdots \times \Sp(n-k_p))$ & $4\sum_{1 \leq i < j \leq p+1} n_i n_j$ & $8 n^2$ \\ 
        $\LGr(\bbR^{2n})$ & $\U(n) / \O(n)$ & $n(n+1)/2$ & $2n^2$ \\ 
        $\LGr(\bbC^{2n})$ & $\Sp(n) / \U(n)$ & $n(n+1)$ & $8n^2$ \\ 
        $\LGr(\bbH^{2n})$ & $\U(2n) / \Sp(n)$ & $n(2n-1)$ & $8n^2$
    \end{tabular}
    }
    \caption{Stiefel, Grassmann, flag, and Lagrangian Grassmann manifolds over $\bbR$, $\bbC$, and $\bbH$.}  
    \label{tab:manifolds}
\end{table}

\subsection{Worst-case error of cubature in reproducing kernel Hilbert spaces}

In the more general setting of a smooth, connected, closed Riemannian manifold $\calM$ of (intrinsic) dimension $d$, Monte Carlo methods approximate the integral of some $f: \calM \to \bbR$ with respect to the uniform (Haar) measure $\mu$ on $\calM$ satisfying $\mu(\calM) = 1$ by the weighted cubature rule
\begin{equation*}
    \int_\calM f(X) d\mu(X) \approx \sum_{i=1}^N w_i f(X_i)
\end{equation*}
with points $X_1,\dots,X_N \in \calM$ and weights $w_1,\dots,w_N \in \bbR$. We would like to find points and weights which obtain the optimal-rate worst-case error of the above approximation over functions in the Sobolev reproducing kernel Hilbert space $H(K_\mathrm{B}^{(s)})$ on $\calM$ with smoothness parameter $s>d/2$ and Bessel kernel $K_\mathrm{B}^{(s)}$, see \Cref{sec:Matern_kernels} and \citep{brandolini2010quadrature,ehler2019optimal}. The optimal worst-case error in the Sobolev reproducing kernel Hilbert space $H(K_\mathrm{B}^{(s)})$ is  $\calO(N^{-s/d})$,  and there exists a set of points $X_1,\dots,X_N$ and weights $w_1,\dots,w_N$ which obtains this optimal rate \citep{brandolini2010quadrature}.

In the special case of the Grassmann manifold, \citep{breger2017quasi} considered optimizing points $X_1,\dots,X_N$ with equal weights $w_i = 1/N$ for $1 \leq i \leq N$ with fixed $N$. For the  matrix manifolds in \Cref{sec:matrix_manifolds}, such a procedure would require optimizing over $\calO(Nr)$ parameters which quickly becomes infeasible for moderate $r$ and $N$. Rather than using expensive optimization routines to find optimal high-dimensional points $X_1,\dots,X_N$, one would prefer to use independent and identically distributed uniform points $X_1,\dots,X_N \simiid \mu$ which may be sampled at $\calO(N n^3)$ cost using standard procedures backed by the QR decomposition, eigendecomposition, and singular value decomposition \citep{guo2025eigen}. However, Monte Carlo methods with independent and identically distributed points $X_1,\dots,X_N$ and equal weights $w_i = 1/N$ for $1 \leq i \leq N$ have expected worst-case error of exact order $N^{-1/2}$, see \citep{brauchart2014qmc,breger2018points,hinrichs2010optimal,novak2010tractability,plaskota2009new} and \citep[Corollary 2.8]{graef2013efficient}. Such rules are necessarily worse than the optimal $\calO(N^{-s/d})$ attainable in Sobolev spaces where $s>d/2$. Fortunately, \citep{ehler2019optimal} showed that using independent and identically distributed points $X_1,\dots,X_N$ and worst-case-error-minimizing weights $w_1,\dots,w_N$ in Sobolev spaces achieves the optimal expected worst-case error $\calO(N^{-s/d})$ up to logarithmic factors.

\subsection{Challenges addressed in this article} 

Two central challenges remain in developing a feasible numerical integration scheme on matrix manifolds using independent and identically distributed points and optimized weights: 
\begin{enumerate}
    \item The first challenge is to circumvent the need to evaluate the kernel mean embedding (the integral of the kernel) which arises in the reproducing kernel Hilbert space worst-case error form \eqref{eq:WCE}. Related methods \citep{ehler2019optimal,breger2017quasi,brauchart2012quasi,brauchart2014qmc} have either worked out kernel mean embeddings for special cases of matrix manifolds, resorted to numerical approximations of the embeddings, or optimized bounds on the worst-case error. Here we use the exact worst-case error form and avoid the need to compute the kernel mean embedding by choosing worst-case-error-minimizing weights $w_1,\dots,w_N$ under the restriction $\sum_{i=1}^N w_i = 1$, i.e., the weights sum to $1$ and the cubature scheme exactly integrates constant functions. We show in \Cref{thm:WCE_w_sum1_asymp_WCE_w_any} that, for kernels with constant mean embedding, the worst-case errors of the constrained and unconstrained optimal weights are related by an explicit identity and are asymptotically equivalent, and in \Cref{corr:matern_weights_optimal} that the resulting rule attains the optimal expected rate up to logarithmic factors.
    \item The second challenge is to determine a computable kernel form for Sobolev spaces on the matrix manifolds of interest. Existing work, such as that for numerical integration on the Grassmann manifold \citep{breger2017quasi}, uses polynomial kernels and diffusion kernels which approximate the desired Sobolev space but do not exactly reproduce all functions in the space. Drawing upon work in \citep{de2021reproducing,borovitskiy2020matern,FuselierWright2012}, we prove in \Cref{thm:matern_chordal} that the computable Mat\'ern kernel of the Frobenius distance between matrices exactly reproduces the Sobolev space of interest with an equivalent norm.
\end{enumerate}

\subsection{Outline of remaining sections} 

The remainder of this article is organized as follows. \Cref{sec:methods} details our methodology. Specifically, \Cref{sec:error_weighted_cubature} discusses the error of weighted cubature rules in reproducing kernel Hilbert spaces, and \Cref{sec:Matern_kernels} details the Sobolev reproducing kernel Hilbert spaces of interest and the Mat\'ern kernels with Frobenius distances that reproduce these Sobolev spaces with equivalent norms. \Cref{sec:numerical_experiments} describes our implementation of these methods and gives numerical experiments. Specifically, \Cref{sec:implementation} gives a unified algorithm for generating independent and identically distributed matrix points with optimal weights constrained to sum to one and then links our open-source software package, \Cref{sec:optimal_wce_convergence} validates theoretical convergence rates, and \Cref{sec:Kubota} applies the proposed methods to simultaneous approximation of intrinsic volumes in the Cauchy--Kubota formula. \Cref{sec:conclusion} gives a brief conclusion. 

\section{Methods} \label{sec:methods}

\subsection{Weighted cubature error in reproducing kernel Hilbert spaces} \label{sec:error_weighted_cubature}

The presentation in this section and the next closely follows that of \citep{ehler2019optimal}. Suppose $\calM$ is any smooth, connected, closed Riemannian manifold of (intrinsic) dimension $d$. Let $\mu$ be the normalized Riemannian measure, i.e., the uniform measure or normalized Haar measure on the manifold $\calM$ satisfying $\mu(\calM) = 1$. Suppose $H := H(K)$ is a reproducing kernel Hilbert space of real-valued functions with symmetric positive definite kernel $K: \calM \times \calM \to \bbR$, i.e., $K$ is symmetric, $K(X,Y)=K(Y,X)$ for all $X,Y \in \calM$, and positive definite, $\sum_{i,j=1}^N c_i c_j K(X_i,X_j) > 0$ for all $N \in \bbN$, nonzero $c_1,\dots,c_N \in \bbR$, and distinct points $X_1,\dots,X_N \in \calM$. For points $\calX = (X_1,\dots,X_N) \subset \calM$ and weights $w = (w_1,\dots,w_N) \in \bbR^N$, \citep{hickernell.generalized_discrepancy_quadrature_error_bound} shows that we may write the squared worst-case error over functions in $H$ as 
\begin{equation} \label{eq:WCE} 
\begin{aligned}
    \WCEm{}{\calX}{w}{H}^2 :=& \sup_{\substack{f \in H \\ \llVert f \rrVert_H \leq 1}} \llvert \int_\calM f(X) d\mu(X) - \sum_{i=1}^N w_i f(X_i) \rrvert^2 \\
    =& \sum_{i,j=1}^N w_i w_j K(X_i,X_j)  -2 \sum_{i=1}^N w_i \int_\calM K(X_i,Y) d\mu(Y) \\
    &+ \int_\calM \int_\calM K(X,Y) d\mu(X) d\mu(Y).
\end{aligned}
\end{equation}
If independent and identically distributed uniform points $X_1,\dots,X_N \simiid \mu$ and equal weights $w_i = 1/N$ for $1 \leq i \leq N$ are used, then  \citep{brauchart2014qmc,breger2018points,novak2010tractability,graef2013efficient} have shown that in a reproducing kernel Hilbert space $H$
\begin{equation*}
    \sqrt{\bbE\left[\WCEm{}{\calX}{\vec{1}/N}{H}^2\right]} \asymp N^{-1/2}.
\end{equation*}

Throughout this article we will assume the kernel mean embedding integral $\int_\calM K(X,Y) d\mu(Y)$ in \eqref{eq:WCE} is constant as a function of $X$, which we will denote by 
\begin{equation} \label{eq:kernel_mean_embedding} 
    \gamma := \int_\calM K(X,Y) d\mu(Y) = \int_\calM \int_\calM K(X,Y) d\mu(Y) d\mu(X)
\end{equation} 
for any $X \in \calM$. This property holds whenever a group $\calG$ acts transitively on $\calM$ by $\mu$-preserving maps and $K$ is $\calG$-invariant. For the matrix manifolds $\calM$ in \Cref{sec:matrix_manifolds}, this is the case for any group invariant kernel $K(X,Y) = K(Q \cdot X,Q \cdot Y)$ where $X,Y \in \calM$ and $Q \cdot X$ denotes the group action of an arbitrary group element $Q \in \calG$ on $X$. Specifically, the group action is \begin{equation} \label{eq:group_action} 
    Q \cdot X = \begin{cases} Q X, & \calM = \V \\  Q X Q^*, & \calM \in \{\Gr,\Flag\} \end{cases} 
\end{equation}
with groups $\calG \in \{\O(n),\U(n),\Sp(n)\}$ for respective fields $\{\bbR,\bbC,\bbH\}$, and 
\begin{equation} \label{eq:group_action_2} 
    Q \cdot X = \begin{cases} QXQ^\top, & \calM = \LGr(\bbR^{2n}) \\  QXQ^\la, & \calM = \LGr(\bbC^{2n})\\QXQ^\s, & \calM = \LGr(\bbH^{2n}) \end{cases} 
\end{equation}
with groups $\calG \in \{\U(n),\Sp(n),\U(2n)\}$ for respective fields $\{\bbR,\bbC,\bbH\}$.
 Examples of such group invariant kernels over these matrix manifolds are those which only depend on the Frobenius distance, such as the Mat\'ern kernels of the Frobenius distance we define in \Cref{sec:Matern_kernels}, or those which only depend on the Frobenius inner product, such as the polynomial kernels in \Cref{remark:polynomial_kernels}. An explicit proof of group invariance for kernels of the Frobenius norm on these matrix manifolds is given in the second item of  \Cref{thm:matern_chordal}. A similar proof holds for kernels of the Frobenius inner product on these matrix manifolds. 

Let us denote the $N \times N$ symmetric positive definite Gram matrix by $G \in \bbR^{N \times N}$ where $G_{ij} = K(X_i,X_j)$ for $i,j \in \{1,\dots,N\}$. Then \eqref{eq:WCE}  may be written as 
\begin{equation*}
    \WCEm{}{\calX}{w}{H}^2 = w^\top G w - 2 \gamma w^\top \vec{1} + \gamma,
\end{equation*}
where $\vec{1}$ is the vector of ones. 
For a fixed set of points $\calX$, the optimal unconstrained weights are 
\begin{equation} \label{eq:wstar_any}
    \tw := \argmin_{w \in \bbR^N} \WCEm{}{\calX}{w}{H}^2 = \gamma G^{-1} \vec{1}
\end{equation}
for which 
\begin{equation*}
    \WCEm{}{\calX}{\tw}{H}^2 = \gamma - \gamma^2 \vec{1}^\top G^{-1} \vec{1}.
\end{equation*}
Unlike \citep{ehler2019optimal}, we do not assume the kernel integral $\gamma$ in \eqref{eq:kernel_mean_embedding} is analytically available, thus making $\tw$ in \eqref{eq:wstar_any} infeasible to compute. To avoid evaluating this kernel integral, we constrain the weights to sum to $1$, i.e., $w^\top \vec{1} = \sum_{i=1}^N w_i = 1$, under which
\begin{equation} \label{eq:wstar_sum1}
    \hw := \argmin_{\substack{w \in \bbR^N \\ w^\top \vec{1} = 1}} \WCEm{}{\calX}{w}{H}^2 = \argmin_{\substack{w \in \bbR^N \\ w^\top \vec{1} = 1}} \left[w^\top G w - \gamma\right] = \argmin_{\substack{w \in \bbR^N \\ w^\top \vec{1} = 1}} w^\top G w = \frac{G^{-1} \vec{1}}{\vec{1}^\top G^{-1} \vec{1}}
\end{equation}
and for which 
\begin{equation} \label{eq:WCE_RKHS_m_wstar_sum1}
    \WCEm{}{\calX}{\hw}{H}^2 = \frac{1}{\vec{1}^\top G^{-1} \vec{1}} - \gamma.
\end{equation}
The weights $\hw$ coincide with the Bayes--Sard cubature weights for a constant prior mean \citep{karvonen2018bayes} (equivalently, ordinary kriging), since the kernel mean embedding is constant; the point here is that this constancy removes the need to evaluate it. The following theorem shows that for continuous kernels with a positive constant mean embedding, the worst-case errors of the optimal unconstrained weights, $\tw$, and the optimal weights constrained to sum to 1, $\hw$, are related by an explicit identity. In particular, they agree up to a uniform constant and are asymptotically equivalent.

\begin{theorem} \label{thm:WCE_w_sum1_asymp_WCE_w_any}
    Let $\calM$ be a smooth, connected, closed Riemannian manifold of finite (intrinsic) dimension $d$ and $\mu$ be the uniform measure on $\calM$ with $\mu(\calM) = 1$. Let $H := H(K)$ be a reproducing kernel Hilbert space with real-valued symmetric positive definite kernel $K: \calM \times \calM \to \bbR$. Suppose $K$ is continuous and $\int_\calM K(X,Y)\,d\mu(Y) = \gamma > 0$ for all $X \in \calM$. Then $1 \in H$ with $\llVert 1 \rrVert_H^2 = 1/\gamma$, and for any distinct points $\calX \subset \calM$,
    \begin{equation} \label{eq:WCE_hm_tm_identity}
        \WCEm{}{\calX}{\hw}{H}^2 = \frac{\WCEm{}{\calX}{\tw}{H}^2}{1 - \WCEm{}{\calX}{\tw}{H}^2/\gamma},
    \end{equation}
    so that
    \begin{equation} \label{eq:WCE_hm_tm_bounds}
        \WCEm{}{\calX}{\tw}{H}^2 \le \WCEm{}{\calX}{\hw}{H}^2 \le \frac{\min_i K(X_i,X_i)}{\gamma}\WCEm{}{\calX}{\tw}{H}^2,
    \end{equation}
    and $\WCEm{}{\calX}{\hw}{H}/\WCEm{}{\calX}{\tw}{H} \to 1$ whenever $\WCEm{}{\calX}{\tw}{H} \to 0$.
\end{theorem}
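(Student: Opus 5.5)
The proof has two parts. First we show $1 \in H$ with the stated norm, using the Riesz representer of the integration functional. Then we derive \eqref{eq:WCE_hm_tm_identity} and \eqref{eq:WCE_hm_tm_bounds} by direct algebra on the closed forms for $\WCEm{}{\calX}{\tw}{H}^2$ and \eqref{eq:WCE_RKHS_m_wstar_sum1}. Throughout, write $a := \vec{1}^\top G^{-1}\vec{1}$, which is strictly positive because $G$ is symmetric positive definite for distinct points.

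\emph{Step 1: $1 \in H$ and $\llVert 1\rrVert_H^2 = 1/\gamma$.} Consider the functional $I(f) := \int_\calM f\,d\mu$ on $H$. Since $\calM$ is compact and $K$ is continuous, $\kappa := \sup_X K(X,X) < \infty$. The reproducing property and Cauchy--Schwarz give $|f(X)| \le \llVert f\rrVert_H \sqrt{\kappa}$, so $|I(f)| \le \sqrt{\kappa}\,\llVert f\rrVert_H$ and $I$ is bounded; measurability of $f$ follows from continuity of elements of $H$. By the Riesz representation theorem there is $m \in H$ with $I(f) = \langle f, m\rangle_H$. Evaluating at $f = K(\cdot,X)$ gives
\begin{equation*}
    m(X) = \langle m, K(\cdot,X)\rangle_H = I(K(\cdot,X)) = \int_\calM K(X,Y)\,d\mu(Y) = \gamma .
\end{equation*}
Hence $m = \gamma \cdot 1$, so $1 = m/\gamma \in H$. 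Moreover
\begin{equation*}
    \llVert 1\rrVert_H^2 = \frac{\langle m,m\rangle_H}{\gamma^2} = \frac{I(m)}{\gamma^2} = \frac{\gamma}{\gamma^2} = \frac{1}{\gamma}.
\end{equation*}
This is the only step needing functional-analytic care, namely boundedness of $I$ and continuity of $K$; I expect it to be the main, though mild, obstacle.

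\emph{Step 2: the identity.} From the earlier formulas,
\begin{equation*}
    \WCEm{}{\calX}{\tw}{H}^2 = \gamma - \gamma^2 a, \qquad \WCEm{}{\calX}{\hw}{H}^2 = \frac{1}{a} - \gamma .
\end{equation*}
Therefore $1 - \WCEm{}{\calX}{\tw}{H}^2/\gamma = \gamma a > 0$, and
\begin{equation*}
    \frac{\gamma - \gamma^2 a}{\gamma a} = \frac{1}{a} - \gamma ,
\end{equation*}
which is exactly \eqref{eq:WCE_hm_tm_identity}. Equivalently, $\WCEm{}{\calX}{\hw}{H}^2 = \WCEm{}{\calX}{\tw}{H}^2/(\gamma a)$.

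\emph{Step 3: the bounds and the limit.} For the lower bound, nonnegativity of the worst-case error gives $\gamma - \gamma^2 a \ge 0$, i.e.\ $\gamma a \le 1$. For the upper bound, use the variational formula
\begin{equation*}
    a = \sup_{c \ne 0} \frac{(c^\top \vec{1})^2}{c^\top G c}.
\end{equation*}
Choosing $c = e_i$ yields $a \ge 1/K(X_i,X_i)$ for every $i$, hence $a \ge 1/\min_i K(X_i,X_i)$. Substituting into $\WCEm{}{\calX}{\hw}{H}^2 = \WCEm{}{\calX}{\tw}{H}^2/(\gamma a)$ gives $1/(\gamma a) \le \min_i K(X_i,X_i)/\gamma$, which is \eqref{eq:WCE_hm_tm_bounds}. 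Finally, \eqref{eq:WCE_hm_tm_identity} gives
\begin{equation*}
    \frac{\WCEm{}{\calX}{\hw}{H}^2}{\WCEm{}{\calX}{\tw}{H}^2} = \frac{1}{1 - \WCEm{}{\calX}{\tw}{H}^2/\gamma} \to 1
\end{equation*}
as $\WCEm{}{\calX}{\tw}{H} \to 0$. Taking square roots gives the claimed ratio limit.
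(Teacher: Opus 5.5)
Your proof is correct and follows the same structure as the paper's. You identify the representer of integration as $\gamma\cdot 1$, reduce both worst-case errors to closed forms in $a=\vec{1}^\top G^{-1}\vec{1}$, and bound $1/a\le\min_i K(X_i,X_i)$. The differences are only in sub-steps:
\begin{itemize}
\item You get the representer from Riesz, using boundedness of $f\mapsto\int f\,d\mu$, where the paper uses a Bochner integral.
\item You get the lower bound from $\gamma a\le 1$, where the paper uses minimality of $\tw$.
\item You get $a\ge 1/K(X_i,X_i)$ from the Rayleigh quotient at $c=e_i$, where the paper applies Cauchy--Schwarz to the interpolant of the constant $1$; this is the same inequality in dual form.
\end{itemize}
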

\begin{proof}
    First, we show that $1\in H$ and $\llVert 1\rrVert_H^2 = 1/\gamma$. Since $K$ is continuous on the compact set $\calM\times\calM$ we have $c_K := \sup_{X\in\calM}K(X,X)<\infty$. The map $Y\mapsto K(\cdot,Y)\in H$ is continuous, as
    $$\llVert K(\cdot,Y)-K(\cdot,Y')\rrVert_H^2 = K(Y,Y)-2K(Y,Y')+K(Y',Y'),$$
    and bounded, as
    $$\llVert K(\cdot,Y)\rrVert_H = K(Y,Y)^{1/2}\le c_K^{1/2}.$$
    Hence the Bochner integral $m := \int_\calM K(\cdot,Y)\,d\mu(Y)$ exists in $H$. Because $g\mapsto\langle f,g\rangle_H$ is continuous and linear, the reproducing property gives
    $$\langle f,m\rangle_H = \int_\calM\langle f,K(\cdot,Y)\rangle_H\,d\mu(Y) = \int_\calM f\,d\mu \qquad \text{for all } f\in H.$$
    Taking $f = K(\cdot,X)$ shows $m(X) = \int_\calM K(X,Y)\,d\mu(Y) = \gamma$ for every $X\in\calM$, i.e., $m = \gamma\cdot 1$. Since $\gamma>0$, $1 = m/\gamma\in H$ and
    $$\llVert 1\rrVert_H^2 = \frac{\langle m,m\rangle_H}{\gamma^2} = \frac{1}{\gamma^2}\int_\calM m\,d\mu = \frac{\gamma}{\gamma^2} = \frac{1}{\gamma}.$$

    Next, we show the identity in \eqref{eq:WCE_hm_tm_identity}. As $K$ is positive definite and the points are distinct, $G$ is symmetric positive definite, so $a := \vec{1}^\top G^{-1}\vec{1} > 0$. Substituting \eqref{eq:wstar_any} and \eqref{eq:wstar_sum1} into \eqref{eq:WCE}  gives
    $$\WCEm{}{\calX}{\tw}{H}^2 = \gamma(1-\gamma a), \qquad \WCEm{}{\calX}{\hw}{H}^2 = \frac{1}{a}-\gamma = \frac{1-\gamma a}{a}.$$
    The first equation gives $1-\WCEm{}{\calX}{\tw}{H}^2/\gamma = \gamma a > 0$. Therefore
    $$\frac{\WCEm{}{\calX}{\tw}{H}^2}{1-\WCEm{}{\calX}{\tw}{H}^2/\gamma} = \frac{\gamma(1-\gamma a)}{\gamma a} = \frac{1-\gamma a}{a} = \WCEm{}{\calX}{\hw}{H}^2.$$

    We now show the bounds in \eqref{eq:WCE_hm_tm_bounds}. The lower bound holds because $\tw$ minimizes $\WCEm{}{\calX}{w}{H}$ over all $w\in\bbR^N$, a set that contains $\hw$. For the upper bound, if $\WCEm{}{\calX}{\tw}{H}=0$, then \eqref{eq:WCE_hm_tm_identity} forces $\WCEm{}{\calX}{\hw}{H}=0$ and the bound is trivial. Otherwise, the computation above gives
    $$\frac{\WCEm{}{\calX}{\hw}{H}^2}{\WCEm{}{\calX}{\tw}{H}^2} = \frac{1}{\gamma a}.$$
    Now let $c := G^{-1}\vec{1}$ and $f := \sum_{j=1}^N c_jK(\cdot,X_j)\in H$. Then $f(X_i) = (Gc)_i = 1$ for all $i$ and $\llVert f\rrVert_H^2 = c^\top Gc = a$. By the reproducing property and Cauchy--Schwarz, for each $i$,
    $$1 = f(X_i) = \langle f,K(\cdot,X_i)\rangle_H \le \llVert f\rrVert_H K(X_i,X_i)^{1/2} = a^{1/2}K(X_i,X_i)^{1/2},$$
    so $1/a\le\min_i K(X_i,X_i)$, and the upper bound follows.

    Finally, we show the limit.  By \eqref{eq:WCE_hm_tm_identity}
    $$\frac{\WCEm{}{\calX}{\hw}{H}}{\WCEm{}{\calX}{\tw}{H}} = \bigl(1-\WCEm{}{\calX}{\tw}{H}^2/\gamma\bigr)^{-1/2} \to 1$$
    as $\WCEm{}{\calX}{\tw}{H}\to 0$.
\end{proof}

\subsection{Mat\'ern kernels of Frobenius distance on matrix manifolds} \label{sec:Matern_kernels}

To define appropriate Sobolev reproducing kernel Hilbert spaces, let $\Delta_\calM$ denote the Laplace--Beltrami operator on $\calM$, and let $\varphi_0,\varphi_1,\dots$ be the collection of its orthonormal eigenfunctions with respective eigenvalues $-\lambda_0,-\lambda_1,\dots$ arranged by $0=\lambda_0 \leq \lambda_1 \leq \cdots$. We choose $\varphi_\ell$ with $\ell \geq 0$ to be real-valued with $\varphi_0 \equiv 1$. The Fourier transform of $f \in L_2$ is defined by 
$$\hf(\ell) := \int_\calM f(X) \varphi_\ell(X) d\mu(X), \qquad \ell=0,1,2,\dots$$
with the usual extension to distributions on $\calM$. For $s>d/2$, let $H^s(\calM) := H(K_\mathrm{B}^{(s)})$ denote the Sobolev reproducing kernel Hilbert space with Bessel kernel
\begin{equation} \label{eq:Bessel_kernel}
    K_\mathrm{B}^{(s)}(X,Y) = \sum_{\ell=0}^\infty (1+\lambda_\ell)^{-s} \varphi_\ell(X) \varphi_\ell(Y)
\end{equation}
so that $H(K_\mathrm{B}^{(s)})$ consists of all distributions on $\calM$ with $(I-\Delta_\calM)^{s/2} f \in L_2$, i.e., with 
\begin{equation} \label{eq:sobolev_norm}
    \llVert f \rrVert_{H(K_\mathrm{B}^{(s)})} := \llVert (I-\Delta_\calM)^{s/2} f \rrVert_{L_2} = \llVert \sum_{\ell=0}^\infty (1+\lambda_\ell)^{s/2} \hf(\ell) \varphi_\ell \rrVert_{L_2} < \infty.
\end{equation} 
Each function in $H(K_\mathrm{B}^{(s)})$ is continuous, see \citep{brandolini2010quadrature} and \citep[Theorem 7.4.5, Section 7.4.2]{triebel.theory_function_spaces_2}, so that point evaluation is well-defined. For any $s>d/2$ and any set of points $X_1,\dots,X_N \in \calM$ with arbitrary weights $w_1,\dots,w_N \in \bbR$, we have 
\begin{equation} \label{eq:WCE_lower_bound_Sobolev} 
    N^{-s/d} \lesssim \WCEm{}{\calX}{w}{H(K_\mathrm{B}^{(s)})},
\end{equation}
see \citep{brauchart2014qmc} for the sphere and \citep{brandolini2010quadrature} for the general case. Note that the constant in the above equation may depend on $s$ and $\calM$. 

Let us define the covering radius 
\begin{equation*}
    \rho_N := \max_{X \in \calM} \min_{i=1,\dots,N} \mathrm{dist}_\calM(X,X_i)
\end{equation*}
where $\mathrm{dist}_\calM$ denotes the geodesic distance. According to \citep{breger2018points}, it also holds that 
$$\rho_N^{s+d/2} \lesssim \WCEm{}{\calX}{w}{H(K_\mathrm{B}^{(s)})}.$$
Note that the assumptions of \Cref{thm:WCE_w_sum1_asymp_WCE_w_any} always hold for the Bessel kernel when $s>d/2$: $K_\mathrm{B}^{(s)}$ is continuous, and since $\int_\calM\varphi_\ell\,d\mu = \langle\varphi_\ell,\varphi_0\rangle_{L_2} = \delta_{\ell0}$, \eqref{eq:Bessel_kernel} gives $\int_\calM K_\mathrm{B}^{(s)}(X,Y)\,d\mu(Y) = 1$, i.e., $\gamma = 1$. The following result is an immediate consequence of combining \Cref{thm:WCE_w_sum1_asymp_WCE_w_any} with \citep[Theorem 1]{ehler2019optimal}, which is stated for nonnegative optimal weights but extends to the unconstrained optimal weights $\tw$ since these have no larger worst-case error \citep[Remarks 2 and 4]{ehler2019optimal}.

\begin{theorem} \label{thm:Ehler_Thm1_RKHS}
    Let $s>d/2$ and $X_1,\dots,X_N \in \calM$ be any set of distinct points with covering radius $\rho_N$. Then the optimized weights constrained to sum to $1$ in \eqref{eq:wstar_sum1}, computed with $K_\mathrm{B}^{(s)}$, satisfy
    \begin{equation*}
        \WCEm{}{\calX}{\hw}{H(K_\mathrm{B}^{(s)})} \lesssim \rho_N^s.
    \end{equation*}
\end{theorem}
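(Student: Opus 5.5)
The plan is to reduce the statement to the unconstrained optimal weights $\tw$ through \Cref{thm:WCE_w_sum1_asymp_WCE_w_any}, and then bound $\tw$ by the known result of Ehler et al.

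\textbf{Step 1: the hypotheses of \Cref{thm:WCE_w_sum1_asymp_WCE_w_any} hold for $K = K_\mathrm{B}^{(s)}$.}
\begin{itemize}
\item For $s>d/2$ the series \eqref{eq:Bessel_kernel} converges uniformly. This follows from Weyl's law $\lambda_\ell \asymp \ell^{2/d}$ together with the standard sup-norm bounds on eigenfunction clusters, so $K_\mathrm{B}^{(s)}$ is continuous.
\item $K_\mathrm{B}^{(s)}$ is symmetric.
\item It is positive definite. Its eigenvalues $(1+\lambda_\ell)^{-s}$ are all strictly positive, and point evaluations at distinct points are linearly independent functionals on $H^s(\calM)$. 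This is because $H^s(\calM)$ contains all smooth functions, so one can interpolate arbitrary data.
\item Orthonormality with $\varphi_0\equiv 1$ gives $\gamma = 1$, as already noted before the statement.
\end{itemize}

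\textbf{Step 2: apply the upper bound in \eqref{eq:WCE_hm_tm_bounds}.} With $\gamma=1$ it gives
\[
\WCEm{}{\calX}{\hw}{H(K_\mathrm{B}^{(s)})}^2 \le \Bigl(\min_i K_\mathrm{B}^{(s)}(X_i,X_i)\Bigr)\,\WCEm{}{\calX}{\tw}{H(K_\mathrm{B}^{(s)})}^2 \le c_K\,\WCEm{}{\calX}{\tw}{H(K_\mathrm{B}^{(s)})}^2,
\]
where $c_K=\sup_X K_\mathrm{B}^{(s)}(X,X)<\infty$. The constant $c_K$ depends only on $s$ and $\calM$, and can be absorbed into $\lesssim$.

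\textbf{Step 3: bound the unconstrained optimal weights.} Invoke \citep[Theorem 1]{ehler2019optimal}. It states that for any points with covering radius $\rho_N$ there exist nonnegative weights $w^+$ with
\[
\WCEm{}{\calX}{w^+}{H(K_\mathrm{B}^{(s)})}\lesssim \rho_N^s.
\]
Since $\tw$ minimizes the worst-case error over all of $\bbR^N$, this gives
\[
\WCEm{}{\calX}{\tw}{H(K_\mathrm{B}^{(s)})}\le \WCEm{}{\calX}{w^+}{H(K_\mathrm{B}^{(s)})}\lesssim\rho_N^s.
\]
This is exactly the extension mentioned in \citep[Remarks 2 and 4]{ehler2019optimal}. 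Combining it with Step 2 yields the claim.

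\textbf{Main obstacle.} The only delicate point is matching the hypotheses of Ehler et al.'s theorem, in particular the precise Sobolev norm they use and any restriction on $\rho_N$. Their result may be stated only for $\rho_N$ small, or for quadrature with weights derived from a partition of the manifold. If $\rho_N$ exceeds some threshold, I would handle that case separately. One would use the trivial bound $\WCEm{}{\calX}{\hw}{H}^2 \le w^\top G w$ with $w=e_1$, which gives $\WCEm{}{\calX}{\hw}{H}^2 \le c_K$. Since $\rho_N$ is bounded below by a constant in that regime, this bound is again $\lesssim \rho_N^{2s}$.
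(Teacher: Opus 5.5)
Your proposal is correct and follows the paper's own argument. You apply the upper bound of \Cref{thm:WCE_w_sum1_asymp_WCE_w_any} with $\gamma=1$ for the Bessel kernel, then bound $\tw$ via \citep[Theorem 1]{ehler2019optimal} for nonnegative weights together with the minimality of $\tw$ over $\bbR^N$; your explicit checks of continuity and strict positive definiteness of $K_\mathrm{B}^{(s)}$, and your fallback for large $\rho_N$, are harmless additions that the paper leaves implicit.
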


For random points $X_1,\dots,X_N \in \calM$ and any type of weights $w_1,\dots,w_N \in  \bbR$, no matter if random or not, \eqref{eq:WCE_lower_bound_Sobolev} implies that for any $q>0$
\begin{equation*}
    N^{-s/d} \lesssim \left(\bbE\left[\WCEm{}{\calX}{w}{H(K_\mathrm{B}^{(s)})}^q\right]\right)^{1/q}
\end{equation*}
where the constant may depend on $s$ and $\calM$. The following result is an immediate consequence of combining \Cref{thm:WCE_w_sum1_asymp_WCE_w_any} with \citep[Corollary 1 and Remark 4]{ehler2019optimal}.

\begin{corollary} \label{corr:Ehler_Corr1_RKHS}
    Let $X_1,\dots,X_N \in \calM$ be random points, independently distributed according to $\mu$, and suppose $s>d/2$. Then using the optimized weights constrained to sum to $1$ in \eqref{eq:wstar_sum1}, computed with $K_\mathrm{B}^{(s)}$, for each $q \geq 1/s$, it holds that
    \begin{equation*}
        \left(\bbE\left[\WCEm{}{\calX}{\hw}{H(K_\mathrm{B}^{(s)})}^q\right]\right)^{1/q} \lesssim N^{-s/d} \log(N)^{s/d}.
    \end{equation*}
\end{corollary}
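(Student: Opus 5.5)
The plan is to reduce to the unconstrained optimal weights $\tw$ and then quote \citep[Corollary 1 and Remark 4]{ehler2019optimal}. For $K_\mathrm{B}^{(s)}$ with $s>d/2$ we have already checked that the hypotheses of \Cref{thm:WCE_w_sum1_asymp_WCE_w_any} hold: the kernel is continuous and $\gamma=1$. Moreover, $c := \sup_X K_\mathrm{B}^{(s)}(X,X) = \sum_\ell (1+\lambda_\ell)^{-s}\varphi_\ell(X)^2$ is finite and does not depend on $X$ or $N$.

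First, dispose of a degenerate event. Since $\mu$ is absolutely continuous with respect to the Riemannian volume, the i.i.d. points $X_1,\dots,X_N$ are almost surely distinct. Hence $G$ is almost surely invertible, and $\hw$ and $\tw$ are well defined almost surely.

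Second, apply the upper bound in \eqref{eq:WCE_hm_tm_bounds} pointwise on this almost sure event. It gives
\begin{equation*}
    \WCEm{}{\calX}{\hw}{H(K_\mathrm{B}^{(s)})} \le c^{1/2}\, \WCEm{}{\calX}{\tw}{H(K_\mathrm{B}^{(s)})}.
\end{equation*}
The constant $c^{1/2}$ is deterministic. Raising to the power $q$, taking expectations, and taking the $1/q$-th root yields
\begin{equation*}
    \left(\bbE\left[\WCEm{}{\calX}{\hw}{H}^q\right]\right)^{1/q} \le c^{1/2}\left(\bbE\left[\WCEm{}{\calX}{\tw}{H}^q\right]\right)^{1/q}.
\end{equation*}

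Third, invoke \citep[Corollary 1]{ehler2019optimal}. For i.i.d. $\mu$-distributed points and $q\ge 1/s$, it gives the bound $N^{-s/d}\log(N)^{s/d}$ for the optimal nonnegative weights. By \citep[Remarks 2 and 4]{ehler2019optimal}, the unconstrained minimizer $\tw$ has worst-case error no larger than that of any other weight vector, in particular the nonnegative optimal one. So the same moment bound holds for $\tw$, and combining this with the second step finishes the proof.

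The main obstacle is bookkeeping rather than analysis. One must confirm that Ehler et al.'s result is stated for the same Sobolev norm, normalization of $\mu$, and range $q\ge 1/s$. If their kernel is only norm-equivalent to $K_\mathrm{B}^{(s)}$, the worst-case errors change only by constant factors, which are absorbed into $\lesssim$.

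An alternative that avoids the min-$K$ bound is to combine \Cref{thm:Ehler_Thm1_RKHS} with moment bounds on the covering radius, $\bbE[\rho_N^{sq}]^{1/q}\lesssim (\log N/N)^{s/d}$. This is essentially how Ehler et al.'s corollary is derived, so it can serve as a fallback if their statement does not transfer directly.
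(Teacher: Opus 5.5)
Your proposal is correct and is the same argument the paper intends: apply the upper bound in \eqref{eq:WCE_hm_tm_bounds} with $\gamma=1$ for $K_\mathrm{B}^{(s)}$ to compare $\hw$ with $\tw$, then use \citep[Corollary 1 and Remarks 2 and 4]{ehler2019optimal} to bound the moments for $\tw$. One small nit: on a general closed manifold $K_\mathrm{B}^{(s)}(X,X)$ need not be constant in $X$, but you only need $\sup_X K_\mathrm{B}^{(s)}(X,X)<\infty$, which follows from continuity of $K_\mathrm{B}^{(s)}$ on the compact manifold $\calM$.
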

Concretely, \Cref{corr:Ehler_Corr1_RKHS} shows that independent points and optimal weights constrained to sum to $1$ achieve the optimal rate $N^{-s/d}$ up to the logarithmic factor $\log(N)^{s/d}$, where the constant in the above equation may depend on $s$, $\calM$, and $q$.

Deriving the eigensystem of the Laplace--Beltrami operator and then determining an analytic expression for the Bessel kernel in \eqref{eq:Bessel_kernel} is often infeasible. To overcome this, we note that if $K_\mathrm{M}^{(s)}$ is a reproducing kernel Hilbert space kernel for which $H(K_\mathrm{B}^{(s)}) = H(K_\mathrm{M}^{(s)})$ (i.e., it reproduces the space) then the norms are necessarily equivalent,  $c_1 \llVert f \rrVert_{H(K_\mathrm{M}^{(s)})} \leq \llVert f \rrVert_{H(K_\mathrm{B}^{(s)})} \leq c_2 \llVert f \rrVert_{H(K_\mathrm{M}^{(s)})}$ for some $c_1,c_2 >0$, and 
\begin{equation*}
    \WCEm{}{\calX}{w}{H(K_\mathrm{B}^{(s)})} \asymp \WCEm{}{\calX}{w}{H(K_\mathrm{M}^{(s)})}.
\end{equation*}
We show in \Cref{thm:matern_chordal} that for the matrix manifolds $\calM$ in \Cref{sec:matrix_manifolds}, one such reproducing and norm-equivalent kernel is the Mat\'ern kernel of the Frobenius distance with $\nu = s-d/2>0$, lengthscale $\kappa>0$, and general scale $\sigma^2>0$:
\begin{equation} \label{eq:Matern_K}
    K_\mathrm{M}^{(s)}(X,Y) = \sigma^2 \tK^{(s)}_\mathrm{M}( \llVert X-Y \rrVert_\mathrm{F}), \qquad \tK_\mathrm{M}^{(s)}(Z) = \frac{2^{1-\nu}}{\Gamma(\nu)} \left(\sqrt{2 \nu} \frac{Z}{\kappa} \right)^\nu K_\nu\left(\sqrt{2 \nu} \frac{Z}{\kappa} \right).
\end{equation}
Here $\Gamma$ is the Gamma function, $K_\nu$ is the  modified Bessel function of the second kind, and $\llVert \cdot \rrVert_\mathrm{F}$ is the Frobenius distance. We remark that when $\nu=s-d/2$ is a half-integer we have 
\begin{equation*}
    \tK_\mathrm{M}^{(s)}(Z) = \exp\left(-\sqrt{2 \nu}\kappa^{-1} Z\right) \sum_{j=0}^{\nu-1/2} c_{\nu,j} (\kappa^{-1} Z)^j 
\end{equation*}
where $c_\nu \in \bbR_{>0}^{\nu+1/2}$ are known scalar coefficients, and the Gaussian kernel is the limiting case 
$$\lim_{\nu \to \infty} \tK_\mathrm{M}^{(s)}(Z) = \exp(- (Z/\kappa)^2 / 2).$$

The following theorem shows the Mat\'ern kernel of Frobenius distance in \eqref{eq:Matern_K} reproduces the Sobolev space with the Bessel kernel in \eqref{eq:Bessel_kernel} with equivalent norms, as well as showing that the conditions of \Cref{thm:WCE_w_sum1_asymp_WCE_w_any} are satisfied. In particular, the constant in \Cref{thm:WCE_w_sum1_asymp_WCE_w_any} is explicit and independent of $\sigma^2$.

\begin{theorem} \label{thm:matern_chordal}
     For all the matrix manifolds in \Cref{tab:manifolds}, the Mat\'ern kernel $K_\mathrm{M}^{(s)}$ in \eqref{eq:Matern_K} satisfies the following properties when $\nu=s-d/2>0$.  
     \begin{enumerate}
        \item $H(K_\mathrm{M}^{(s)})$ reproduces the Sobolev reproducing kernel Hilbert space $H^s(\calM)$ with equivalent norms. 
        \item $\int_\calM K_\mathrm{M}^{(s)}(X,Y) d\mu(Y)$ is constant as a function of $X$. 
        \item $K_\mathrm{M}^{(s)}$ is continuous and strictly positive definite on $\calM$, and $K_\mathrm{M}^{(s)}(X,X) = \sigma^2$ for all $X\in\calM$.
        \item The constant in item (2) is $\gamma = \sigma^2\tgamma$, where $$\tgamma := \int_\calM \tK_\mathrm{M}^{(s)}(\llVert X-Y\rrVert_\mathrm{F})\,d\mu(Y)\in(0,1]$$ for any $X\in\calM$. Consequently, \Cref{thm:WCE_w_sum1_asymp_WCE_w_any} applies with $\llVert 1\rrVert_{H(K_\mathrm{M}^{(s)})}^2 = 1/(\sigma^2\tgamma)$ and
        $$\WCEm{}{\calX}{\hw}{H(K_\mathrm{M}^{(s)})}^2 \le \tgamma^{-1}\,\WCEm{}{\calX}{\tw}{H(K_\mathrm{M}^{(s)})}^2.$$
    \end{enumerate}
\end{theorem}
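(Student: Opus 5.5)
Item (1) is the substantive part. The plan is to realize each manifold $\calM$ in \Cref{tab:manifolds} as a compact smooth embedded submanifold of a real Euclidean space $\bbR^r$. The Frobenius distance on $\calM$ is then the restriction of the Euclidean distance of $\bbR^r$. Complex and quaternionic matrices are identified with real vectors via real and imaginary parts, quaternions via the complex $2\times 2$ block representation, and the Frobenius norm of the complex representation is a fixed multiple of the Euclidean norm.

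Embeddedness follows because each $\calM$ is an orbit of a compact Lie group acting linearly on matrix space: $X\mapsto QX$ or $X\mapsto QXQ^*$ and its variants. Such orbits are closed embedded submanifolds diffeomorphic to the homogeneous quotient listed in the table, and they inherit a Riemannian metric from $\bbR^r$. Since all Riemannian metrics on a compact manifold give equivalent Sobolev spaces $H^s(\calM)$, the metric choice is immaterial.

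On $\bbR^r$, the Mat\'ern kernel with smoothness $\nu$ reproduces $H^{\nu+r/2}(\bbR^r)$, since its Fourier transform is $\propto (2\nu/\kappa^2+\|\omega\|^2)^{-\nu-r/2}$. The restriction theorem of Fuselier--Wright \citep{FuselierWright2012} then shows that restricting to a $d$-dimensional compact embedded submanifold gives $H^{\nu+r/2-(r-d)/2}(\calM)=H^{\nu+d/2}(\calM)=H^s(\calM)$ with equivalent norms, using the trace theorem loss of $(r-d)/2$ derivatives. This matches $\nu=s-d/2$. Equality of the spaces then gives norm equivalence by the closed graph argument already noted in the paper. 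The main obstacle is checking the hypotheses of the restriction result: smooth embeddedness and the correct Sobolev index across all twelve models. I would handle this uniformly through the orbit argument rather than case by case.

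For item (2), I would show that the group actions \eqref{eq:group_action}--\eqref{eq:group_action_2} are Frobenius isometries, so $K_\mathrm{M}^{(s)}(Q\cdot X,Q\cdot Y)=K_\mathrm{M}^{(s)}(X,Y)$. For example, $\|QXQ^*-QYQ^*\|_\mathrm{F}=\|X-Y\|_\mathrm{F}$ by unitary invariance of the Frobenius norm. The same holds for $QXQ^\top$ with unitary $Q$, since $Q^\top$ is also unitary, and for $Q^\la$ and $Q^\s$, using $Q\in\Sp(n)$, $I_{n,n}$, and $J_{2n}$ unitary in the complex representation. Transitivity of the action and invariance of $\mu$, both by definition of $\mu$ as the pushforward of Haar measure, then give $\int K(Q\cdot X,Y)\,d\mu(Y)=\int K(X,Q^{-1}\cdot Y)\,d\mu(Y)=\int K(X,Y)\,d\mu(Y)$.

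For item (3), continuity follows from continuity of $\tK_\mathrm{M}^{(s)}$ on $[0,\infty)$ with $\tK_\mathrm{M}^{(s)}(0)=1$ (the standard Bessel limit), so $K(X,X)=\sigma^2$. Strict positive definiteness is inherited from $\bbR^r$, where Bochner's theorem with an everywhere positive spectral density gives strict positive definiteness for distinct points. The embedding is injective, so distinct points of $\calM$ are distinct in $\bbR^r$. For item (4), $\tK_\mathrm{M}^{(s)}$ is positive and at most $1$ (positive definite with $\tK(0)=1$ and $K_\nu>0$), so $\tgamma\in(0,1]$. \Cref{thm:WCE_w_sum1_asymp_WCE_w_any} then applies with $\gamma=\sigma^2\tgamma$, and its upper bound $\min_iK(X_i,X_i)/\gamma=\sigma^2/(\sigma^2\tgamma)=\tgamma^{-1}$ gives the stated inequality.
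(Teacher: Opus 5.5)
Your proposal is correct and follows essentially the same route as the paper: the Mat\'ern spectral density gives $H^{\nu+r/2}(\bbR^r)$, and Fuselier--Wright's restriction result lowers the index by $(r-d)/2$ to reach $H^s(\calM)$; Frobenius-isometric transitive group actions give item (2), the everywhere-positive spectral density gives strict positive definiteness, and $K_\nu>0$ together with $\lvert K(X,Y)\rvert\le\sigma^2$ gives $\tgamma\in(0,1]$ and the bound via \Cref{thm:WCE_w_sum1_asymp_WCE_w_any}. Your added remarks on embeddedness of compact group orbits and metric-independence of $H^s(\calM)$ usefully make explicit hypotheses that the paper leaves implicit when it invokes Fuselier--Wright.
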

\begin{proof}
    Proofs of each item are below.
    \begin{enumerate}
        \item Let $K_{\mathrm{B}'}^{(\nu+r/2)}$ denote the Bessel potential kernel on $\bbR^r$, whose reproducing kernel Hilbert space is $H^{\nu+r/2}(\bbR^r)$, and write $H(K)|_\calM$ for the reproducing kernel Hilbert space of $K$ restricted to $\calM\times\calM$. Then
        $$H(K_\mathrm{M}^{(s)}) = H(K_{\mathrm{B}'}^{(\nu+r/2)})|_\calM = H(K_\mathrm{B}^{((\nu+r/2)-(r-d)/2)}) = H(K_\mathrm{B}^{(s)}),$$
        where the first equality follows from the Fourier transform of $K_\mathrm{M}^{(s)}$ in \eqref{eq:Matern_K} \citep[Chapter 4]{rasmussen.gp4ml}, and the second equality follows from \citep[Proposition~2]{FuselierWright2012}.
        \item  The second item follows immediately from the group invariance of Frobenius distance for these matrix manifolds. Specifically, for any $X,X' \in \calM$ there exists a $Q \in \calG$ such that $X'=Q \cdot X$ where $Q \cdot X$ is the group action in \eqref{eq:group_action} or \eqref{eq:group_action_2}. Letting $Y' = Q \cdot Y$, notice that $\mu(dY) = \mu(dY')$ and $\llVert Q \cdot X - Q \cdot Y \rrVert_\mathrm{F} = \llVert X-Y \rrVert_\mathrm{F}$ for any $Q \in \calG$ and $X,Y \in \calM$. Then the result follows from the invariance 
        $$\int_\calM f(\llVert X'-Y' \rrVert_\mathrm{F}) d\mu(Y') = \int_\calM f(\llVert X - Y \rrVert_\mathrm{F}) d\mu(Y)$$
        for any $X,X' \in \calM$, which implies the integral in item (2) is constant as a function of $X$.
        \item The Mat\'ern function $\tK_\mathrm{M}^{(s)}$ is continuous on $[0,\infty)$ with $\tK_\mathrm{M}^{(s)}(0) = 1$, so $K_\mathrm{M}^{(s)}$ is continuous and $K_\mathrm{M}^{(s)}(X,X) = \sigma^2$. The Mat\'ern kernel is strictly positive definite on $\bbR^r$, since its spectral density is positive everywhere \citep[Chapter 4]{rasmussen.gp4ml}. Hence it is strictly positive definite on any subset of $\bbR^r$, in particular on $\calM$.
        \item By item (2), $\gamma = \int_\calM K_\mathrm{M}^{(s)}(X,Y)\,d\mu(Y)$ for any fixed $X\in\calM$. Since $K_\nu(z)>0$ for $z>0$, $\tK_\mathrm{M}^{(s)}>0$; since $K_\mathrm{M}^{(s)}$ is positive definite, $\lvert K_\mathrm{M}^{(s)}(X,Y)\rvert \le (K_\mathrm{M}^{(s)}(X,X)K_\mathrm{M}^{(s)}(Y,Y))^{1/2} = \sigma^2$, so $\tK_\mathrm{M}^{(s)}\le1$. Hence $\tgamma\in(0,1]$ and $\gamma=\sigma^2\tgamma>0$. Together with items (2)--(3), this verifies the hypotheses of \Cref{thm:WCE_w_sum1_asymp_WCE_w_any}, whose upper bound with $\min_i K_\mathrm{M}^{(s)}(X_i,X_i)=\sigma^2$ gives the stated inequality.
    \end{enumerate}
\end{proof}

\begin{corollary} \label{corr:matern_weights_optimal}
    Let $\calM$ be any matrix manifold in \Cref{tab:manifolds}, let $\nu = s-d/2>0$, and let $\hw$ be the weights in \eqref{eq:wstar_sum1} computed from the Mat\'ern Gram matrix $G_{ij} = K_\mathrm{M}^{(s)}(X_i,X_j)$. If $X_1,\dots,X_N \simiid \mu$, then for each $q \geq 1/s$,
    \begin{equation*}
        \left(\bbE\left[\WCEm{}{\calX}{\hw}{H(K_\mathrm{B}^{(s)})}^q\right]\right)^{1/q} \lesssim N^{-s/d}\log(N)^{s/d},
    \end{equation*}
    and the same bound holds with $H(K_\mathrm{B}^{(s)})$ replaced by $H(K_\mathrm{M}^{(s)})$.
\end{corollary}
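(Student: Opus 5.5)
The idea is to transfer the known Bessel-kernel result to the Mat\'ern weights by norm equivalence. The key subtlety is that $\hw$ here is computed from the Mat\'ern Gram matrix, not the Bessel one, so \Cref{corr:Ehler_Corr1_RKHS} does not apply verbatim. I would therefore work first in $H(K_\mathrm{M}^{(s)})$ and only at the end pass to $H(K_\mathrm{B}^{(s)})$.

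\emph{Step 1 (norm equivalence of WCEs).} By item (1) of \Cref{thm:matern_chordal}, $H(K_\mathrm{M}^{(s)}) = H(K_\mathrm{B}^{(s)})$ with equivalent norms. The unit ball of one space is therefore contained in a constant multiple of the unit ball of the other. Hence, for every point set $\calX$ and every weight vector $w$,
\[
\WCEm{}{\calX}{w}{H(K_\mathrm{B}^{(s)})} \asymp \WCEm{}{\calX}{w}{H(K_\mathrm{M}^{(s)})},
\]
with constants independent of $\calX$, $w$ and $N$. This holds in particular for $w=\hw$ computed from the Mat\'ern Gram matrix. It therefore suffices to bound the $q$-th moment of $\WCEm{}{\calX}{\hw}{H(K_\mathrm{M}^{(s)})}$.

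\emph{Step 2 (reduce to unconstrained Mat\'ern weights).} Items (2)--(4) of \Cref{thm:matern_chordal} verify the hypotheses of \Cref{thm:WCE_w_sum1_asymp_WCE_w_any} for $K_\mathrm{M}^{(s)}$. Since i.i.d.\ points from $\mu$ are almost surely distinct, we obtain almost surely
\[
\WCEm{}{\calX}{\hw}{H(K_\mathrm{M}^{(s)})} \le \tgamma^{-1/2}\,\WCEm{}{\calX}{\tw}{H(K_\mathrm{M}^{(s)})}.
\]
Here $\tw$ is the unconstrained optimal weight vector for the Mat\'ern kernel. It minimizes the Mat\'ern WCE over all $w\in\bbR^N$. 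Let $\tw_\mathrm{B}$ denote the unconstrained optimal weights for the Bessel kernel. Combining the minimizing property of $\tw$ with Step 1 gives
\[
\WCEm{}{\calX}{\tw}{H(K_\mathrm{M}^{(s)})} \le \WCEm{}{\calX}{\tw_\mathrm{B}}{H(K_\mathrm{M}^{(s)})} \lesssim \WCEm{}{\calX}{\tw_\mathrm{B}}{H(K_\mathrm{B}^{(s)})}.
\]

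\emph{Step 3 (moment bound).} Taking $q$-th moments, it remains to show
\[
\bigl(\bbE\bigl[\WCEm{}{\calX}{\tw_\mathrm{B}}{H(K_\mathrm{B}^{(s)})}^q\bigr]\bigr)^{1/q} \lesssim N^{-s/d}\log(N)^{s/d}.
\]
This is exactly \citep[Corollary 1 and Remark 4]{ehler2019optimal} for the unconstrained Bessel weights. Alternatively, one can use \Cref{corr:Ehler_Corr1_RKHS} together with the lower bound in \eqref{eq:WCE_hm_tm_bounds}, which with $\gamma=1$ gives $\WCEm{}{\calX}{\tw_\mathrm{B}}{H(K_\mathrm{B}^{(s)})} \le \WCEm{}{\calX}{\hw_\mathrm{B}}{H(K_\mathrm{B}^{(s)})}$. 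Chaining Steps 1--3 yields the bound in both $H(K_\mathrm{M}^{(s)})$ and $H(K_\mathrm{B}^{(s)})$. All constants depend only on $s$, $\calM$, $q$, $\kappa$, $\nu$ and $\sigma^2$; note that $\tgamma$ is independent of $N$.

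\emph{Main obstacle.} The delicate point is Step 2's comparison across kernels. The weights are optimal for one kernel but must be measured in the other norm, so one must avoid claiming that $\hw$ is optimal in the Bessel norm. The route above sidesteps this by comparing two minimizers within the same (Mat\'ern) norm and using norm equivalence only for fixed weights. Measurability of the WCE as a function of the random points is routine, since the WCE is a continuous function of the points on the almost sure set of distinct configurations.
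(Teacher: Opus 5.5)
Your proof is correct. It shares the paper's core idea, comparing two minimizers within the Mat\'ern norm and using norm equivalence only for fixed weights, but it applies that idea to the unconstrained problem rather than the sum-to-one one, which adds an unnecessary detour.

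\textbf{The paper's route.} The paper stays inside the sum-to-one class. Write $\hw_\mathrm{B}$ for the sum-to-one weights computed with $K_\mathrm{B}^{(s)}$. Since $\hw_\mathrm{B}$ is feasible for the Mat\'ern problem, minimality of $\hw$ and norm equivalence give, for every point set,
$\mathrm{WCE}_\mathrm{B}(\hw) \le c_1^{-1}\mathrm{WCE}_\mathrm{M}(\hw) \le c_1^{-1}\mathrm{WCE}_\mathrm{M}(\hw_\mathrm{B}) \le (c_2/c_1)\,\mathrm{WCE}_\mathrm{B}(\hw_\mathrm{B})$.
\Cref{corr:Ehler_Corr1_RKHS} then finishes the Bessel bound. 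The $H(K_\mathrm{M}^{(s)})$ bound comes from the tail of the same chain.

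\textbf{Your route.} You minimize over all of $\bbR^N$, which forces an extra first step: passing from $\hw$ to $\tw$ via \Cref{thm:WCE_w_sum1_asymp_WCE_w_any} and item (4) of \Cref{thm:matern_chordal}. Your chain is then constrained Mat\'ern $\to$ unconstrained Mat\'ern $\to$ unconstrained Bessel, closed by Ehler et al.'s bound for $\tw_\mathrm{B}$. This is valid: the points are almost surely distinct and $\tgamma$ does not depend on $N$.

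\textbf{What each buys.} Your route has two small advantages.
\begin{itemize}
\item It cites Ehler et al.\ for unconstrained weights directly, with no Bessel-side use of \Cref{thm:WCE_w_sum1_asymp_WCE_w_any}.
\item It shows in passing that the uncomputable Mat\'ern weights $\tw$ are also near-optimal.
\end{itemize}
The costs are an extra $N$-independent factor $\tgamma^{-1/2}$, which depends on $\kappa$ and grows as $\kappa \to 0$, plus one more step. Your alternative ending, via \Cref{corr:Ehler_Corr1_RKHS} and the lower bound in \eqref{eq:WCE_hm_tm_bounds}, is especially roundabout. It goes constrained $\to$ unconstrained $\to$ constrained, where the paper goes constrained $\to$ constrained in one line.
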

\begin{proof}
    Let $\mathrm{WCE}_\mathrm{B}(w)$ and $\mathrm{WCE}_\mathrm{M}(w)$ denote the worst-case errors of $(\calX,w)$ in $H(K_\mathrm{B}^{(s)})$ and $H(K_\mathrm{M}^{(s)})$, respectively, and let $\hw_\mathrm{B}$ denote the weights in \eqref{eq:wstar_sum1} computed with $K_\mathrm{B}^{(s)}$. The norm equivalence $c_1\llVert f\rrVert_{H(K_\mathrm{M}^{(s)})} \le \llVert f\rrVert_{H(K_\mathrm{B}^{(s)})} \le c_2\llVert f\rrVert_{H(K_\mathrm{M}^{(s)})}$ from \Cref{thm:matern_chordal} gives $\mathrm{WCE}_\mathrm{B}(w) \le c_1^{-1}\mathrm{WCE}_\mathrm{M}(w)$ and $\mathrm{WCE}_\mathrm{M}(w) \le c_2\mathrm{WCE}_\mathrm{B}(w)$ for every $w\in\bbR^N$. Since $\hw$ minimizes $\mathrm{WCE}_\mathrm{M}$ over weights summing to one and $\hw_\mathrm{B}$ sums to one,
    \begin{equation*}
        \mathrm{WCE}_\mathrm{B}(\hw) \le c_1^{-1}\mathrm{WCE}_\mathrm{M}(\hw) \le c_1^{-1}\mathrm{WCE}_\mathrm{M}(\hw_\mathrm{B}) \le \frac{c_2}{c_1}\mathrm{WCE}_\mathrm{B}(\hw_\mathrm{B}).
    \end{equation*}
    Similarly, $\mathrm{WCE}_\mathrm{M}(\hw) \le \mathrm{WCE}_\mathrm{M}(\hw_\mathrm{B}) \le c_2\mathrm{WCE}_\mathrm{B}(\hw_\mathrm{B})$. Taking $q$-th moments and applying \Cref{corr:Ehler_Corr1_RKHS} completes the proof.
\end{proof}

\begin{remark}[Application to polynomial  kernels] \label{remark:polynomial_kernels}
    The reproducing kernel Hilbert space polynomial kernel $K_\mathrm{P}^{(t)}: \calM \times \calM \to \bbR$ of degree $t \in \bbN$ on a matrix manifold $\calM$ from \Cref{sec:matrix_manifolds} is 
    \begin{equation} \label{eq:poly_kernel}
        K_\mathrm{P}^{(t)}(X,Y) = (C+ \llangle X, Y \rrangle_\mathrm{F})^t, \quad \llangle X,Y \rrangle_\mathrm{F} = \mathrm{Real}\left(\trace(X^* Y)\right)
    \end{equation}
    where $\llangle \cdot, \cdot \rrangle_\mathrm{F}$ is the Frobenius inner product and we require the constant $C > 0$ so $H(K_\mathrm{P}^{(t)})$ includes constant functions. As $H(K^{(t)}_\mathrm{P})$ is a reproducing kernel Hilbert space of finite dimension $D\le\binom{r+t}{t}$, $K^{(t)}_\mathrm{P}$ is not strictly positive definite and $G$ is singular for $N>D$; there $G^{-1}$ in \eqref{eq:wstar_sum1} must be replaced by a pseudo-inverse (or a unisolvent subset of $D$ points used). With worst-case-error-minimizing weights, the worst-case error vanishes once the points are unisolvent for $H(K_\mathrm{P}^{(t)})$, which for independent and identically distributed points happens almost surely when $N \geq D$; for $N<D$ the expected worst-case error is $\calO(N^{-1/2})$.
 In contrast, independent and identically distributed points with equal weights $w_i=1/N$ for $1 \leq i \leq N$ have expected worst-case error $\calO(N^{-1/2})$ for any $N$, even when $N \geq D$.
\end{remark}

\section{Numerical Experiments} \label{sec:numerical_experiments}

\subsection{Implementation} \label{sec:implementation}

An open-source implementation of the following experiments is available in the ``random matrix quasi-Monte Carlo Python package'' \texttt{rmatqmcpy}, which is readily installed with \texttt{pip install rmatqmcpy} and hosted at \url{https://github.com/alegresor/rmatqmcpy}. This section first discusses numerical validation of optimal convergence rates in \Cref{sec:optimal_wce_convergence} and then discusses an application to the Cauchy--Kubota formula for simultaneous calculation of intrinsic volumes in \Cref{sec:Kubota}.

\Cref{algo:opt_weights} gives the complete procedure for computing independent points and optimal weights. In total, this procedure requires $\calO(Nn^3+N^2r+N^3)$ computations and $\calO(Nr+N^2)$ storage, assuming $K_\mathrm{M}^{(s)}$ costs $\calO(r)$ per evaluation. The same procedure and costs hold for the polynomial kernel $K^{(t)}_\mathrm{P}$ in \eqref{eq:poly_kernel} when $N\le D$; for $N>D$ see \Cref{remark:polynomial_kernels}.

The $\calO(n^3)$ cost of drawing each sample matrix may sometimes be reduced. For example, Stiefel and Grassmann manifolds constructed using the QR decomposition only require $\calO(n k^2)$ computations per draw, which is supported in our implementation. Specifically, for the Grassmann case one may use the fact that for $Q = \begin{bsmallmatrix} Q_1 & Q_2 \end{bsmallmatrix}$ with $Q_1 \in  \bbF^{n \times k}$ and $Q_2 \in \bbF^{n \times (n-k)}$ where $\bbF \in \{\bbR,\bbC,\bbH\}$, we have $I = Q Q^* = Q_1 Q_1^* + Q_2 Q_2^*$ so $Q \Delta Q^* = Q_1 Q_1^* - Q_2 Q_2^* = 2 Q_1 Q_1^* - I$ which enables only generating $Q_1$ at cost $\calO(nk^2)$; moreover $\lVert X_i-X_j\rVert_\mathrm{F}^2=8(k-\lVert Q_{1,i}^*Q_{1,j}\rVert_\mathrm{F}^2)$ (with $k$ replaced by $2k$ in the complex representation when $\bbF=\bbH$), so the Gram matrix costs $\calO(N^2nk^2)$ without forming $X_i$, which is beneficial when $k^2<n$. Similar savings are available for flag manifolds by exploiting block matrix structures and the QR decomposition, although we do not consider this more intricate construction here or in the implementation. Note that these savings are only available by exploiting the nested subspace hierarchy produced by the QR decomposition. Such savings are not available for eigendecompositions, which may be used to uniformly generate samples on $\O(n)$ and $\U(n)$, nor for SVD decompositions which may be used to uniformly generate samples on $\Sp(n)$, as in our implementation.

\begin{algorithm}[htbp]
    \caption{Computation of independent and identically distributed points $X_1,\dots,X_N$ and optimal weights $\hw_1,\dots,\hw_N$}
    \label{algo:opt_weights}
    \begin{algorithmic}
        \Require $\nu = s-d/2 >0$ \Comment{Mat\'ern kernel smoothness parameter}
        \Require $N \in \bbN$ \Comment{Number of points}
        \State Sample independent and identically distributed $X_1,\dots,X_N$ uniformly on $\calM$ \\
        \Comment{See \citep{guo2025eigen}: $\calO(Nn^3)$ computations, $\calO(N r)$ storage}
        \State $G_{ij} \gets K_\mathrm{M}^{(s)}(X_i,X_j)$ for $i,j \in \{1,\dots,N\}$ \\
        \Comment{Gram matrix $G \in \bbR^{N \times N}$: $\calO(N^2r)$ computations, $\calO(N^2)$ storage}
        \State $w \gets G^{-1} \vec{1}$ via Cholesky $G=LL^\top$ \\
        \Comment{Unnormalized weights ($\propto\tw$): $\calO(N^3)$ computations, $\calO(N)$ storage}
        \State $\hw \gets w / \vec{1}^\top w$ \Comment{Optimal constrained weights: $\calO(N)$ computations, $\calO(N)$ storage}
    \end{algorithmic}
\end{algorithm}

Reporting worst-case errors requires the kernel integral $\gamma$ in \eqref{eq:kernel_mean_embedding}. By \Cref{thm:matern_chordal},
$$\gamma = \int_\calM K_\mathrm{M}^{(s)}(X,Y)\,d\mu(Y)$$
for any fixed $X\in\calM$, which we estimate by Monte Carlo, choosing the number of samples separately for each experiment according to the cost of kernel evaluations and the accuracy required. Since $\WCEm{}{\calX}{\hw}{H}^2 = 1/(\vec1^\top G^{-1}\vec1)-\gamma$ involves cancellation, any error in the estimate of $\gamma$ shifts the reported squared worst-case error by the same amount. We emphasize that $\gamma$ is used only to report worst-case errors and not to compute the points or weights. All reported errors are means over $25$ independent trials for the worst-case error experiments in \Cref{fig:WCE} and over $100$ independent trials for the intrinsic volume experiments in \Cref{fig:Vk.cube}. In all experiments we set $\sigma^2=1$ and choose $\kappa$ to be the median of $\llVert X - Y \rrVert_\mathrm{F}$ for independent $X,Y \sim \mu$. We note that the sample size at which asymptotic rates appear to take hold depends strongly on $\kappa$.

\subsection{Optimal worst-case error convergence} \label{sec:optimal_wce_convergence}

\begin{figure}[htbp]
    \centering
    \includegraphics[width=.85\linewidth]{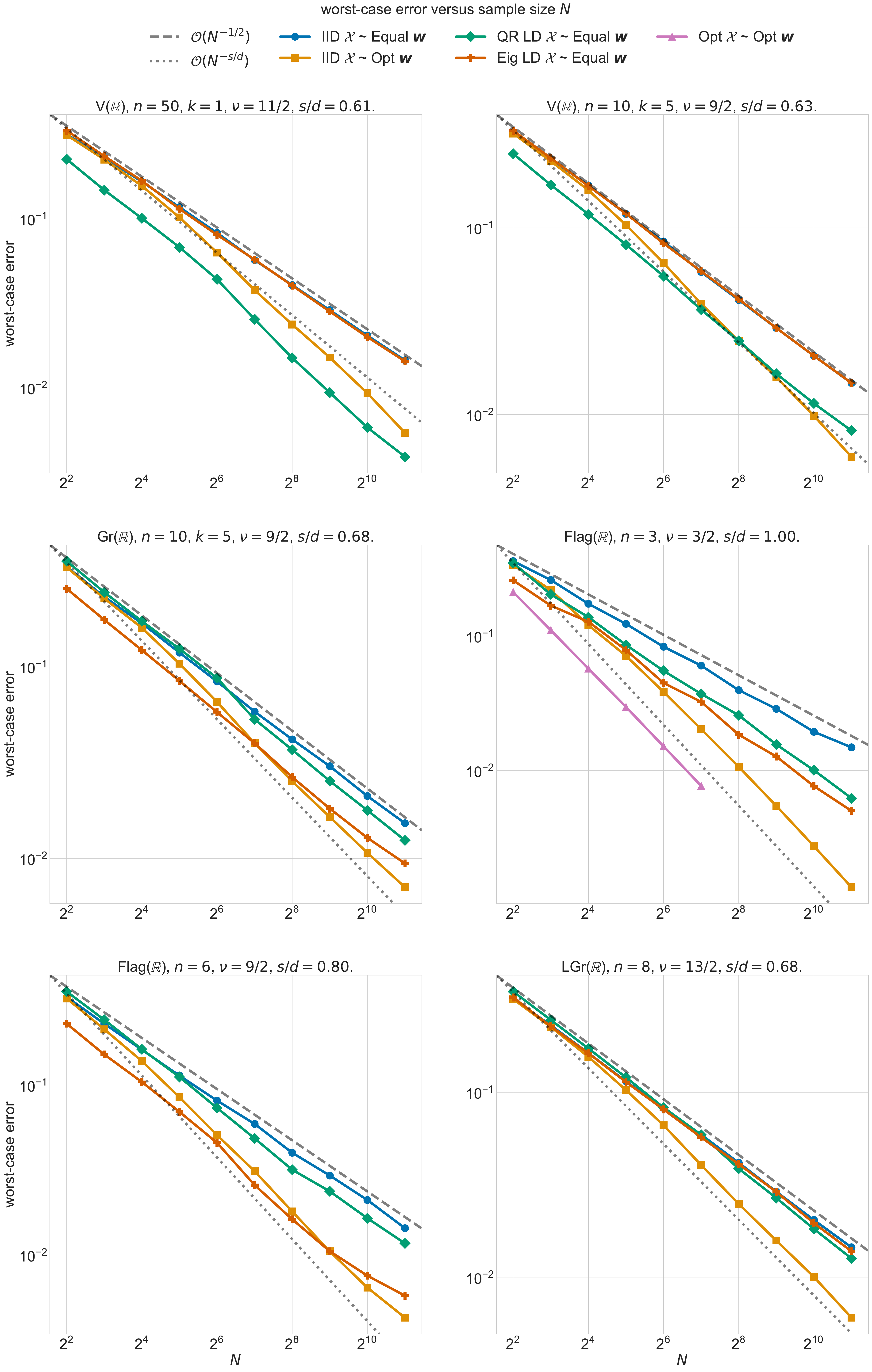}
    \caption{Worst-case error versus $N$ for cubature rules on select real matrix manifolds.}
    \label{fig:WCE}
\end{figure}

In \Cref{fig:WCE} we compare the worst-case error of various numerical integration schemes for select matrix manifolds from \Cref{tab:manifolds}. Here we only consider real manifolds, but our implementation supports the complex and quaternionic cases as well. The real flag manifolds use $\Delta_a = \diag(a)$ with $a = (1,2,3)^\top/\lVert(1,2,3)\rVert_2$ and $a = (1,\dots,6)^\top/\lVert(1,\dots,6)\rVert_2$ respectively. Other parameters are given in the plot titles. In all cases we see that independent and identically distributed $X_1,\dots,X_N$ and equal weights $w_i = 1/N$ for $1 \leq i \leq N$ obtain the theoretical expected worst-case error convergence rate of $\calO(N^{-1/2})$, while independent and identically distributed $X_1,\dots,X_N$ and optimal weights $w_1,\dots,w_N$ restricted to sum to $1$ achieve the optimal $\calO(N^{-s/d})$ expected worst-case error rate. 

In these worst-case error convergence tests, we additionally consider using transformed low-discrepancy points $X_1,\dots,X_N$ in place of independent and identically distributed points while still using equal weights $w_i = 1/N$ for $1 \leq i \leq N$. Specifically, we transform randomized low-discrepancy base-two digital sequences with left linear matrix scrambles and digital shifts \citep{dick2010digital}. Such sequences have been extensively used in quasi-Monte Carlo  methods where the more uniform coverage of low-discrepancy sequences compared to independent and identically distributed sequences leads to improved convergence rates. With equal weights, low-discrepancy points improve upon independent points in the flag, Grassmann, and Stiefel cases, in some cases only in the rate constant and in others also in the empirical rate over the tested range (e.g., QR-based points on $\V(1,\bbR^{50})$ and both constructions on the complete flag with $n=3$). We make no claim about the asymptotic rate. For the Lagrangian Grassmann manifolds, low-discrepancy points and equal weights performed essentially the same as independent and identically distributed points and equal weights. 

Additionally, we notice that there is a delicate tradeoff governing the effectiveness of the QR decomposition versus the eigendecomposition for low-discrepancy points with equal weights. For quasi-Monte Carlo, the efficiency of low-discrepancy points depends on the effective dimension of the integrand and the order of coordinate inputs relative to their importance \citep{dick2013high}. At a high level, quasi-Monte Carlo methods are more effective when the variation of the integrand rapidly decays with the input index. Eigendecompositions generally pair nicely with quasi-Monte Carlo methods as they place greater importance on earlier indices. However, here we are using eigendecompositions to generate $\O(n)$ and $\U(n)$ matrices which are then used to build the submanifolds matrices which are then passed through the Mat\'ern kernels. For complete flag manifolds, we found the eigen-based method provides significant savings over the QR-based method.

In contrast, for the Grassmann and Stiefel manifolds the QR decompositions may reduce the total number of low-discrepancy sequence dimensions from $n^2$ to $nk$. For small $k$, as in the Stiefel manifold example with $k=1$, we found the QR-based method significantly outperformed the eigen-based method, while for $\Gr(5,\bbR^{10})$ the eigen-based method performed better. We leave detailed analysis of the tradeoff between QR-based and eigen-based decompositions with low-discrepancy points to future work.

Finally, for the flag manifold case with $n=3$, we considered jointly optimizing both the points and the weights to minimize the worst-case error subject to weights summing to $1$. Specifically, we optimize the points $X_1,\dots,X_N$ to minimize \eqref{eq:WCE_RKHS_m_wstar_sum1} where the optimal weights may then be computed by \eqref{eq:wstar_sum1}. Notice that if the Gram matrix has a Cholesky decomposition $G=L L^\top$, then minimizing \eqref{eq:WCE_RKHS_m_wstar_sum1} is equivalent to maximizing $\vec{1}^\top G^{-1} \vec{1} = \lVert L^{-1} \vec{1} \rVert_2^2$. We parameterize each point by an unconstrained matrix $Z_i\in\bbR^{n\times n}$ whose QR factor $Q_i$ (with positive diagonal in $R_i$) defines $X_i = Q_i\Delta_aQ_i^\top$, initializing $Z_1,\dots,Z_N$ with the QR-based transformation of low-discrepancy points described above. We then maximize $\lVert v\rVert_2^2$ for the residual vector $v = L^{-1}\vec1 \in \bbR^N$ using an adaptive Levenberg--Marquardt scheme \citep{levenberg1944method,marquardt1963algorithm,more1978levenberg}, where for numerical stability $L$ is the Cholesky factor of $G + \epsilon I$ with $\epsilon = 10^{-8}$. The optimized points and weights obtain the optimal rate $\calO(N^{-s/d})$ and significantly improve upon the rate constant for independent and identically distributed points with optimal weights. However, optimizing over $\calO(N r)$ parameters quickly becomes infeasible in larger scale settings, especially when using a Levenberg--Marquardt scheme which requires $\calO(N^3 r^3)$ operations per step. 

\subsection{Application to the Cauchy--Kubota formula for intrinsic volumes} \label{sec:Kubota}

\begin{figure}[htbp]
    \centering
    \includegraphics[width=1\linewidth]{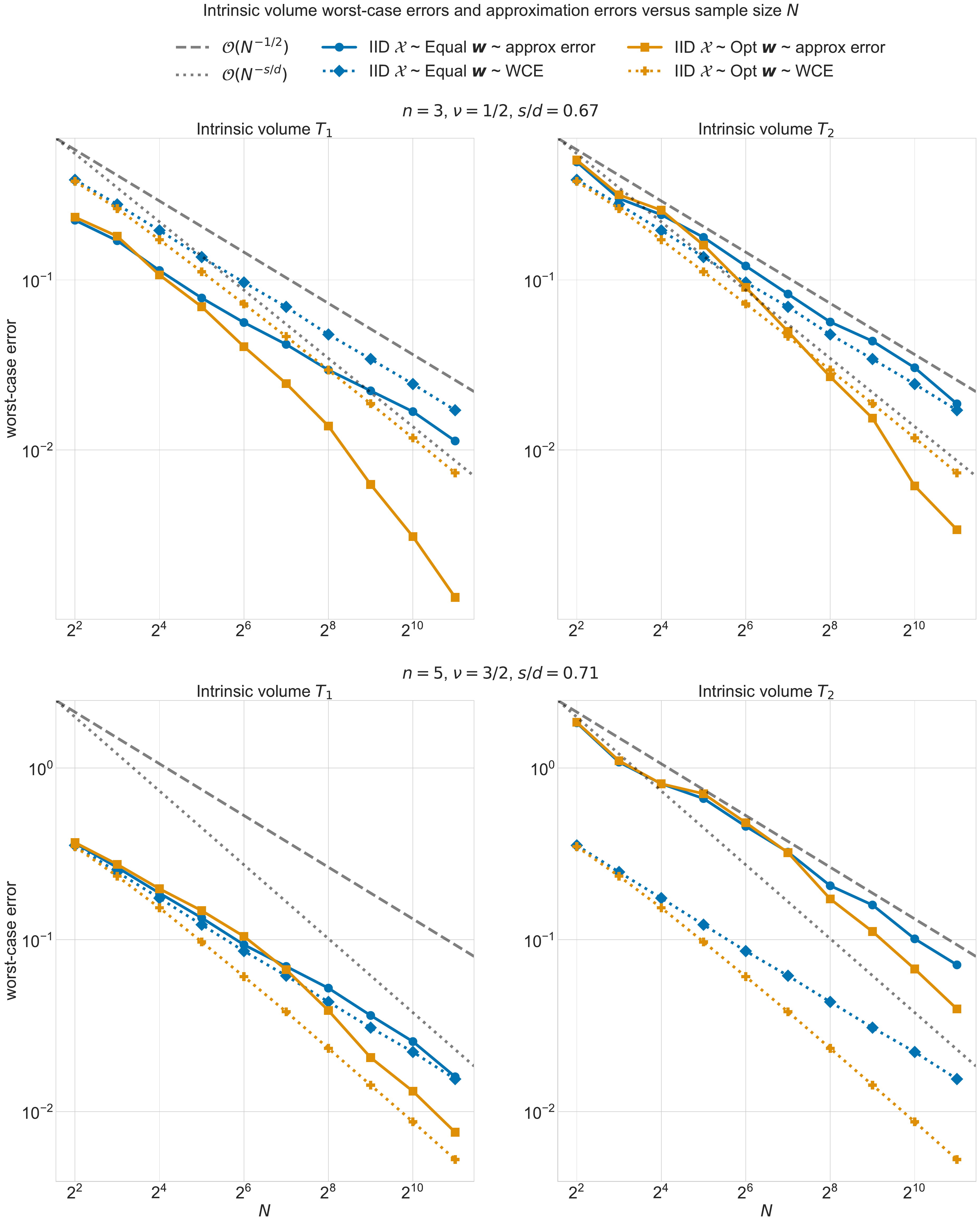}
    \caption{Intrinsic volume computations for the hypercube $[-1,1]^n$ with $n=3$ (top row) and $n=5$ (bottom row). Worst-case errors (dotted) depend only on the points and weights and are therefore identical across columns.}
    \label{fig:Vk.cube}
\end{figure}

Let $\calZ$ be a convex body and let $\calB^n$ denote the unit $n$-ball in $\bbR^n$. The intrinsic volumes $T_\ell(\calZ)$ are defined by 
$$\mathrm{Vol}_n(\calZ+t\calB^n) = \sum_{\ell=0}^n T_\ell(\calZ) B_{n-\ell}t^{n-\ell} \qqtqq{where} B_j :=  \mathrm{Vol}_j(\calB^j) = \frac{\pi^{j/2}}{\Gamma\left(\frac{j}{2}+1\right)}$$
and $\Gamma$ is the Gamma function. If $\tcalZ$ is an $\ell$-dimensional convex body, then $T_\ell(\tcalZ) = \mathrm{Vol}_\ell(\tcalZ)$, i.e., the intrinsic volume equals the actual volume. The Cauchy--Kubota formula \citep{Hadwiger1957,SchneiderWeil2008,klain.intro_geometric_prob,kubota1925konvex} states that if $\calZ \subset \bbR^n$ is a convex body, then for $\ell \leq n$ we have 
$$T_\ell(\calZ) = \binom{n}{\ell} \frac{B_n}{B_{n-\ell} B_\ell} \int_{\calM_\ell} \mathrm{Vol}_\ell(\mathrm{Proj}_{[q_1 | \cdots | q_\ell]}(\calZ)) \mu_\ell(dX)$$
where $\mu_\ell$ is the normalized uniform measure on $\calM_\ell = \Gr(\ell,\bbR^n)$ and, for $X = Q\Delta Q^\top\in\calM_\ell$, $\mathrm{Proj}_{[q_1 | \cdots | q_\ell]}(\calZ)$ is the projection of $\calZ$ onto the subspace spanned by the first $\ell$ columns of $Q$; its volume equals $\mathrm{Vol}_\ell([q_1|\cdots|q_\ell]^\top\calZ)$ and does not depend on the choice of $Q$. We will only consider the case when the convex body $\calZ$ is defined as the convex hull of a finite set of points. In this case, the projection of the convex hull is the convex hull of the projection of the finite set of points defining $\calZ$.

The goal is to compute $T_\ell(\calZ)$ for all $\ell=1,\dots,p$. We restrict our attention to small $p$ where the volumes $\mathrm{Vol}_\ell(\mathrm{Proj}_{[q_1 | \cdots | q_\ell]}(\calZ))$ with $\ell \leq p$ may be efficiently computed using the quickhull algorithm \citep{barber1996quickhull} and its wrapper in \texttt{scipy.\allowbreak spatial.\allowbreak ConvexHull} \citep{2020SciPy-NMeth} for $\ell\ge2$, while for $\ell=1$ the volume is simply the range of the projected points. Rather than generating different Grassmann point sets $X_1,\dots,X_N$ and weights $w_1,\dots,w_N$ for each $T_\ell(\calZ)$ approximation, we generate a single set of points $X_1,\dots,X_N$ and weights $w_1,\dots,w_N$ on $\Flag(1,2,\dots,p,\bbR^n)$ with $\Delta_a = D/\lVert D\rVert_\mathrm{F}$ where $D = \diag(p+1,p,\dots,2,I_{n-p})$, i.e., $a_j = (p+2-j)/\lVert D\rVert_\mathrm{F}$ for $1\le j\le p+1$ with $\lVert D\rVert_\mathrm{F}^2 = \sum_{j=2}^{p+1}j^2+(n-p)$ (so $\Delta_a=\diag(3,2,1)/\sqrt{14}$ and $\diag(3,2,1,1,1)/\sqrt{17}$ in our experiments); since $\spn(q_1,\dots,q_\ell)$ is determined by $X=Q\Delta_aQ^\top$ and is uniform on $\Gr(\ell,\bbR^n)$
 under $\mu$, these may be used to simultaneously approximate $(T_1(\calZ),\dots,T_p(\calZ))$. 

We test our optimal cubature methods by simultaneously approximating the first $p$ intrinsic volumes of the hypercube $[-1,1]^n$ where we know $T_\ell([-1,1]^n) = \binom{n}{\ell} 2^\ell$. In \Cref{fig:Vk.cube}, the top row considers a smaller scale case where $n=3$ with $p=2$ and $\nu=1/2$ so $d=3$ and $s=2$, giving the optimal rate $\calO(N^{-s/d}) = \calO(N^{-2/3})$, while the bottom row considers a larger scale case where $n=5$ with $p=2$ and $\nu=3/2$ so $d=7$ and $s=5$, giving the optimal rate $\calO(N^{-s/d}) = \calO(N^{-5/7})$. For both cases we get the anticipated result: independent and identically distributed points and equal weights obtain error convergence rates like $\calO(N^{-1/2})$, while independent and identically distributed points with optimized weights obtain error convergence rates like $\calO(N^{-s/d})$. Notice that the approximation error may not lie below the worst-case error as the worst-case error is over functions with unit norm in the reproducing kernel Hilbert space, which is not necessarily the case for the integrands here. 

\section{Conclusion} \label{sec:conclusion} 

This article developed optimal order (up to logarithmic factors) weighted cubature rules for Monte Carlo numerical integration in Sobolev reproducing kernel Hilbert spaces on Stiefel, Grassmann, flag, and Lagrangian Grassmann matrix manifolds over $\bbR$, $\bbC$, and $\bbH$. The proposed method for generating cubature points and weights relies on two critical innovations: (1) the use of Mat\'ern kernels of Frobenius (chordal) distances, which we showed exactly reproduce the Sobolev spaces of interest with equivalent norms, and (2) the use of worst-case-error-minimizing  weights constrained to sum to one, which circumvented the requirement to compute difficult-to-evaluate kernel mean embeddings (kernel integrals). These components enabled a simple implementation of our method, for which we provided the open-source software \texttt{rmatqmcpy} (\url{https://github.com/alegresor/rmatqmcpy}). Beyond numerical validation of optimal order worst-case error rates, we demonstrated the utility of our method through an application to simultaneous approximation of intrinsic volumes in the Cauchy--Kubota formula.

\section*{Acknowledgments}

We would like to thank Chris Peterson for discussing applications to intrinsic volume computations. This work is partially supported by DARPA grants HR0011-21-9-0040 and HR0011-25-9-0031.

\bibliographystyle{amsplain}
\bibliography{main}

@article{kubota1925konvex,
  title               = {{\"U}ber die konvex-geschlossenen {M}annigfaltigkeiten im $n$-dimensionalen {R}aume},
  author              = {Kubota, Tadahiko},
  year                = {1925},
  journal             = {Science Reports of the T{\^o}hoku Imperial University. Series 1},
  volume              = {14},
  pages               = {85--99},
}

@article{levenberg1944method,
  title               = {A method for the solution of certain non-linear problems in least squares},
  author              = {Levenberg, Kenneth},
  year                = {1944},
  journal             = {Quart. Appl. Math.},
  volume              = {2},
  pages               = {164--168},
  doi                 = {10.1090/qam/10666},
  issn                = {0033-569X,1552-4485},
  url                 = {https://doi.org/10.1090/qam/10666},
  fjournal            = {Quarterly of Applied Mathematics},
  mrclass             = {65.0X},
  mrnumber            = {10666},
  mrreviewer          = {T.\ E.\ Sterne},
}

@book{Hadwiger1957,
  title               = {Vorlesungen {\"u}ber {I}nhalt, {O}berfl{\"a}che und {I}soperimetrie},
  author              = {Hadwiger, Hugo},
  year                = {1957},
  publisher           = {Springer-Verlag},
  address             = {Berlin, Heidelberg},
  series              = {Die Grundlehren der mathematischen Wissenschaften in Einzeldarstellungen},
  doi                 = {10.1007/978-3-642-94702-5},
}

@article{marquardt1963algorithm,
  title               = {An algorithm for least-squares estimation of nonlinear parameters},
  author              = {Marquardt, Donald W.},
  year                = {1963},
  journal             = {J. Soc. Indust. Appl. Math.},
  volume              = {11},
  pages               = {431--441},
  issn                = {0368-4245},
  fjournal            = {Journal of the Society for Industrial and Applied Mathematics},
  mrclass             = {62.20},
  mrnumber            = {153071},
  mrreviewer          = {M.\ Atiqullah},
}

@article{james1964distributions,
  title               = {Distributions of matrix variates and latent roots derived from normal samples},
  author              = {James, Alan T.},
  year                = {1964},
  journal             = {Ann. Math. Statist.},
  volume              = {35},
  pages               = {475--501},
  doi                 = {10.1214/aoms/1177703550},
  issn                = {0003-4851},
  url                 = {https://doi.org/10.1214/aoms/1177703550},
  fjournal            = {Annals of Mathematical Statistics},
  mrclass             = {62.40},
  mrnumber            = {181057},
  mrreviewer          = {I.\ Olkin},
}

@incollection{more1978levenberg,
  title               = {The {L}evenberg-{M}arquardt algorithm: implementation and theory},
  author              = {Mor\'e, Jorge J.},
  year                = {1978},
  booktitle           = {Numerical analysis ({P}roc. 7th {B}iennial {C}onf., {U}niv. {D}undee, {D}undee, 1977)},
  publisher           = {Springer, Berlin-New York},
  series              = {Lecture Notes in Math.},
  volume              = {630},
  pages               = {105--116},
  isbn                = {3-540-08538-6},
  mrclass             = {65K05},
  mrnumber            = {483445},
}

@book{muirhead1982aspects,
  title               = {Aspects of multivariate statistical theory},
  author              = {Muirhead, Robb J.},
  year                = {1982},
  publisher           = {John Wiley \& Sons, Inc., New York},
  series              = {Wiley Series in Probability and Mathematical Statistics},
  pages               = {xix+673},
  isbn                = {0-471-09442-0},
  mrclass             = {62Hxx},
  mrnumber            = {652932},
  mrreviewer          = {Nariaki\ Sugiura},
}

@book{triebel.theory_function_spaces_2,
  title               = {Theory of function spaces. {II}},
  author              = {Triebel, Hans},
  year                = {1992},
  publisher           = {Birkh\"auser Verlag, Basel},
  series              = {Monographs in Mathematics},
  volume              = {84},
  pages               = {viii+370},
  doi                 = {10.1007/978-3-0346-0419-2},
  isbn                = {3-7643-2639-5},
  url                 = {https://doi.org/10.1007/978-3-0346-0419-2},
  mrclass             = {46Exx (46-02)},
  mrnumber            = {1163193},
  mrreviewer          = {P.\ Szeptycki},
}

@article{barber1996quickhull,
  title               = {The quickhull algorithm for convex hulls},
  author              = {Barber, C. Bradford and Dobkin, David P. and Huhdanpaa, Hannu},
  year                = {1996},
  journal             = {ACM Trans. Math. Software},
  volume              = {22},
  number              = {4},
  pages               = {469--483},
  doi                 = {10.1145/235815.235821},
  issn                = {0098-3500,1557-7295},
  url                 = {https://doi.org/10.1145/235815.235821},
  fjournal            = {Association for Computing Machinery. Transactions on Mathematical Software},
  mrclass             = {65Y25 (65D17)},
  mrnumber            = {1428265},
}

@book{klain.intro_geometric_prob,
  title               = {Introduction to geometric probability},
  author              = {Klain, Daniel A. and Rota, Gian-Carlo},
  year                = {1997},
  publisher           = {Cambridge University Press, Cambridge},
  series              = {Lezioni Lincee. [Lincei Lectures]},
  pages               = {xiv+178},
  isbn                = {0-521-59362-X; 0-521-59654-8},
  mrclass             = {52A22 (52A35 52A40 52A55 52B45)},
  mrnumber            = {1608265},
  mrreviewer          = {Daniel\ M.\ Burns, Jr.},
}

@article{hickernell.generalized_discrepancy_quadrature_error_bound,
  title               = {A generalized discrepancy and quadrature error bound},
  author              = {Hickernell, Fred J.},
  year                = {1998},
  journal             = {Math. Comp.},
  volume              = {67},
  number              = {221},
  pages               = {299--322},
  doi                 = {10.1090/S0025-5718-98-00894-1},
  issn                = {0025-5718,1088-6842},
  url                 = {https://doi.org/10.1090/S0025-5718-98-00894-1},
  fjournal            = {Mathematics of Computation},
  mrclass             = {65D32},
  mrnumber            = {1433265},
  mrreviewer          = {J.\ Kofro\v n},
}

@article{edelman1998geometry,
  title               = {The geometry of algorithms with orthogonality constraints},
  author              = {Edelman, Alan and Arias, Tom\'as A. and Smith, Steven T.},
  year                = {1999},
  journal             = {SIAM J. Matrix Anal. Appl.},
  volume              = {20},
  number              = {2},
  pages               = {303--353},
  doi                 = {10.1137/S0895479895290954},
  issn                = {0895-4798,1095-7162},
  url                 = {https://doi.org/10.1137/S0895479895290954},
  fjournal            = {SIAM Journal on Matrix Analysis and Applications},
  mrclass             = {65F30 (15A18 51F20 65H10 90C30)},
  mrnumber            = {1646856},
  mrreviewer          = {Michael\ M.\ Dediu},
}

@article{guionnet2002large,
  title               = {Large deviations and stochastic calculus for large random matrices},
  author              = {Guionnet, Alice},
  year                = {2004},
  journal             = {Probab. Surv.},
  volume              = {1},
  pages               = {72--172},
  doi                 = {10.1214/154957804100000033},
  issn                = {1549-5787},
  url                 = {https://doi.org/10.1214/154957804100000033},
  fjournal            = {Probability Surveys},
  mrclass             = {60F10 (15A52 46L54 60J65 82B31)},
  mrnumber            = {2095566},
  mrreviewer          = {Dominique\ L\'epingle},
}

@book{bengtsson2006geometry,
  title               = {Geometry of quantum states},
  author              = {Bengtsson, Ingemar and \.Zyczkowski, Karol},
  year                = {2006},
  publisher           = {Cambridge University Press, Cambridge},
  pages               = {xii+466},
  doi                 = {10.1017/CBO9780511535048},
  isbn                = {978-0-521-81451-5; 0-521-81451-0},
  url                 = {https://doi.org/10.1017/CBO9780511535048},
  note                = {An introduction to quantum entanglement},
  mrclass             = {81-01 (47N50 53C80 58B20 81P15 81P68)},
  mrnumber            = {2230995},
  mrreviewer          = {Paul\ B.\ Slater},
}

@book{rasmussen.gp4ml,
  title               = {Gaussian processes for machine learning},
  author              = {Rasmussen, Carl Edward and Williams, Christopher K. I.},
  year                = {2006},
  publisher           = {MIT Press, Cambridge, MA},
  series              = {Adaptive Computation and Machine Learning},
  pages               = {xviii+248},
  isbn                = {978-0-262-18253-9},
  mrclass             = {68T05 (60G15 62G08 62H30 93E35)},
  mrnumber            = {2514435},
  mrreviewer          = {Wenbo\ V.\ Li},
}

@book{SchneiderWeil2008,
  title               = {Stochastic and integral geometry},
  author              = {Schneider, Rolf and Weil, Wolfgang},
  year                = {2008},
  publisher           = {Springer-Verlag, Berlin},
  series              = {Probability and its Applications (New York)},
  pages               = {xii+693},
  doi                 = {10.1007/978-3-540-78859-1},
  isbn                = {978-3-540-78858-4},
  url                 = {https://doi.org/10.1007/978-3-540-78859-1},
  mrclass             = {60-02 (52A22 60D05 60G55 62M30)},
  mrnumber            = {2455326},
  mrreviewer          = {V.\ K.\ Ohanyan},
}

@article{plaskota2009new,
  title               = {New averaging technique for approximating weighted integrals},
  author              = {Plaskota, L. and Wasilkowski, G. W. and Zhao, Y.},
  year                = {2009},
  journal             = {J. Complexity},
  volume              = {25},
  number              = {3},
  pages               = {268--291},
  doi                 = {10.1016/j.jco.2009.02.004},
  issn                = {0885-064X,1090-2708},
  url                 = {https://doi.org/10.1016/j.jco.2009.02.004},
  fjournal            = {Journal of Complexity},
  mrclass             = {65D30},
  mrnumber            = {2524547},
  mrreviewer          = {Aur\'el\ Gal\'antai},
}

@book{dick2010digital,
  title               = {Digital nets and sequences},
  author              = {Dick, Josef and Pillichshammer, Friedrich},
  year                = {2010},
  publisher           = {Cambridge University Press, Cambridge},
  pages               = {xviii+600},
  doi                 = {10.1017/CBO9780511761188},
  isbn                = {978-0-521-19159-3},
  url                 = {https://doi.org/10.1017/CBO9780511761188},
  note                = {Discrepancy theory and quasi-{M}onte {C}arlo integration},
  mrclass             = {65C05 (11K06 11K38 11K45 46E35 65D30)},
  mrnumber            = {2683394},
  mrreviewer          = {Petru\ P.\ Blaga},
}

@article{hinrichs2010optimal,
  title               = {Optimal importance sampling for the approximation of integrals},
  author              = {Hinrichs, Aicke},
  year                = {2010},
  journal             = {J. Complexity},
  volume              = {26},
  number              = {2},
  pages               = {125--134},
  doi                 = {10.1016/j.jco.2009.11.003},
  issn                = {0885-064X,1090-2708},
  url                 = {https://doi.org/10.1016/j.jco.2009.11.003},
  fjournal            = {Journal of Complexity},
  mrclass             = {65D30 (46E22 46N40 47B10 47G10 65C05 68W25)},
  mrnumber            = {2607728},
  mrreviewer          = {Joanna\ E.\ Kapusta},
}

@book{novak2010tractability,
  title               = {Tractability of multivariate problems. {V}olume {II}: {S}tandard information for functionals},
  author              = {Novak, Erich and Wo\'zniakowski, Henryk},
  year                = {2010},
  publisher           = {European Mathematical Society (EMS), Z\"urich},
  series              = {EMS Tracts in Mathematics},
  volume              = {12},
  pages               = {xviii+657},
  doi                 = {10.4171/084},
  isbn                = {978-3-03719-084-5},
  url                 = {https://doi.org/10.4171/084},
  mrclass             = {46N40 (41A55 41A63 47A50 47N40 65Y20 68Q25)},
  mrnumber            = {2676032},
  mrreviewer          = {Arthur\ G.\ Werschulz},
}

@article{brauchart2012quasi,
  title               = {Quasi-{M}onte {C}arlo rules for numerical integration over the unit sphere {$\mathbb{S}^2$}},
  author              = {Brauchart, Johann S. and Dick, Josef},
  year                = {2012},
  journal             = {Numer. Math.},
  volume              = {121},
  number              = {3},
  pages               = {473--502},
  doi                 = {10.1007/s00211-011-0444-6},
  issn                = {0029-599X,0945-3245},
  url                 = {https://doi.org/10.1007/s00211-011-0444-6},
  fjournal            = {Numerische Mathematik},
  mrclass             = {65C05 (65D30)},
  mrnumber            = {2929076},
  mrreviewer          = {Borek\ D.\ Puza},
}

@article{FuselierWright2012,
  title               = {Scattered data interpolation on embedded submanifolds with restricted positive definite kernels: {S}obolev error estimates},
  author              = {Fuselier, Edward and Wright, Grady B.},
  year                = {2012},
  journal             = {SIAM J. Numer. Anal.},
  volume              = {50},
  number              = {3},
  pages               = {1753--1776},
  doi                 = {10.1137/110821846},
  issn                = {0036-1429,1095-7170},
  url                 = {https://doi.org/10.1137/110821846},
  fjournal            = {SIAM Journal on Numerical Analysis},
  mrclass             = {41A05 (41A30 65D05)},
  mrnumber            = {2970764},
  mrreviewer          = {Jeremy\ Levesley},
}

@article{dick2013high,
  title               = {High-dimensional integration: the quasi-{M}onte {C}arlo way},
  author              = {Dick, Josef and Kuo, Frances Y. and Sloan, Ian H.},
  year                = {2013},
  journal             = {Acta Numer.},
  volume              = {22},
  pages               = {133--288},
  doi                 = {10.1017/S0962492913000044},
  issn                = {0962-4929,1474-0508},
  url                 = {https://doi.org/10.1017/S0962492913000044},
  fjournal            = {Acta Numerica},
  mrclass             = {65D30 (65C05)},
  mrnumber            = {3038697},
  mrreviewer          = {Daniela\ Ro\c sca},
}

@phdthesis{graef2013efficient,
  title               = {Efficient Algorithms for the Computation of Optimal Quadrature Points on {R}iemannian Manifolds},
  author              = {Gr{\"a}f, Manuel},
  year                = {2013},
  isbn                = {978-3-941003-89-7},
  url                 = {https://nbn-resolving.org/urn:nbn:de:bsz:ch1-qucosa-115287},
  school              = {TU Chemnitz},
}

@article{brandolini2010quadrature,
  title               = {Quadrature rules and distribution of points on manifolds},
  author              = {Brandolini, Luca and Choirat, Christine and Colzani, Leonardo and Gigante, Giacomo and Seri, Raffaello and Travaglini, Giancarlo},
  year                = {2014},
  journal             = {Ann. Sc. Norm. Super. Pisa Cl. Sci. (5)},
  volume              = {13},
  number              = {4},
  pages               = {889--923},
  issn                = {0391-173X,2036-2145},
  fjournal            = {Annali della Scuola Normale Superiore di Pisa. Classe di Scienze. Serie V},
  mrclass             = {41A55 (11K38 42C10 65D30)},
  mrnumber            = {3362114},
  mrreviewer          = {S.\ Hitotumatu},
}

@article{brauchart2014qmc,
  title               = {Q{MC} designs: optimal order quasi {M}onte {C}arlo integration schemes on the sphere},
  author              = {Brauchart, J. S. and Saff, E. B. and Sloan, I. H. and Womersley, R. S.},
  year                = {2014},
  journal             = {Math. Comp.},
  volume              = {83},
  number              = {290},
  pages               = {2821--2851},
  doi                 = {10.1090/S0025-5718-2014-02839-1},
  issn                = {0025-5718,1088-6842},
  url                 = {https://doi.org/10.1090/S0025-5718-2014-02839-1},
  fjournal            = {Mathematics of Computation},
  mrclass             = {65D30 (11K38 41A55 65C05)},
  mrnumber            = {3246811},
  mrreviewer          = {S.\ Hitotumatu},
}

@article{OszmaniecAugusiakGogolinKolodynskiAcinLewenstein2016,
  title               = {Random Bosonic States for Robust Quantum Metrology},
  author              = {Oszmaniec, M. and Augusiak, R. and Gogolin, C. and Ko\l{}ody\ifmmode \acute{n}\else \'{n}\fi{}ski, J. and Ac\'{\i}n, A. and Lewenstein, M.},
  year                = {2016},
  month               = dec,
  journal             = {Phys. Rev. X},
  publisher           = {American Physical Society},
  volume              = {6},
  pages               = {041044},
  doi                 = {10.1103/PhysRevX.6.041044},
  url                 = {https://link.aps.org/doi/10.1103/PhysRevX.6.041044},
  issue               = {4},
  numpages            = {34},
}

@incollection{breger2017quasi,
  title               = {Quasi {M}onte {C}arlo integration and kernel-based function approximation on {G}rassmannians},
  author              = {Breger, Anna and Ehler, Martin and Gr\"af, Manuel},
  year                = {2017},
  booktitle           = {Frames and other bases in abstract and function spaces},
  publisher           = {Birkh\"auser/Springer, Cham},
  series              = {Appl. Numer. Harmon. Anal.},
  pages               = {333--353},
  isbn                = {978-3-319-55549-2; 978-3-319-55550-8},
  mrclass             = {65C05 (65D30)},
  mrnumber            = {3700125},
  mrreviewer          = {Takashi\ Goda},
}

@article{breger2018points,
  title               = {Points on manifolds with asymptotically optimal covering radius},
  author              = {Breger, Anna and Ehler, Martin and Gr\"af, Manuel},
  year                = {2018},
  journal             = {J. Complexity},
  volume              = {48},
  pages               = {1--14},
  doi                 = {10.1016/j.jco.2018.03.002},
  issn                = {0885-064X,1090-2708},
  url                 = {https://doi.org/10.1016/j.jco.2018.03.002},
  fjournal            = {Journal of Complexity},
  mrclass             = {65D30 (65C05 65Y20)},
  mrnumber            = {3828835},
  mrreviewer          = {Takashi\ Goda},
}

@inproceedings{karvonen2018bayes,
  title               = {A {B}ayes--{S}ard Cubature Method},
  author              = {Karvonen, Toni and Oates, Chris J. and S{\"a}rkk{\"a}, Simo},
  year                = {2018},
  booktitle           = {Advances in Neural Information Processing Systems},
  publisher           = {Curran Associates, Inc.},
  volume              = {31},
  url                 = {https://proceedings.neurips.cc/paper\%5Ffiles/paper/2018/file/6775a0635c302542da2c32aa19d86be0-Paper.pdf},
  editor              = {S. Bengio and H. Wallach and H. Larochelle and K. Grauman and N. Cesa-Bianchi and R. Garnett},
}

@article{briol2015probabilistic,
  title               = {Probabilistic integration: a role in statistical computation?},
  author              = {Briol, Fran\c cois-Xavier and Oates, Chris J. and Girolami, Mark and Osborne, Michael A. and Sejdinovic, Dino},
  year                = {2019},
  journal             = {Statist. Sci.},
  volume              = {34},
  number              = {1},
  pages               = {1--22},
  doi                 = {10.1214/18-STS660},
  issn                = {0883-4237,2168-8745},
  url                 = {https://doi.org/10.1214/18-STS660},
  fjournal            = {Statistical Science. A Review Journal of the Institute of Mathematical Statistics},
  mrclass             = {62-02 (60-08 60B11 60G15 62F15 62G30 65C05)},
  mrnumber            = {3938958},
}

@article{ehler2019optimal,
  title               = {Optimal {M}onte {C}arlo integration on closed manifolds},
  author              = {Ehler, Martin and Gr\"af, Manuel and Oates, Chris J.},
  year                = {2019},
  journal             = {Stat. Comput.},
  volume              = {29},
  number              = {6},
  pages               = {1203--1214},
  doi                 = {10.1007/s11222-019-09894-w},
  issn                = {0960-3174,1573-1375},
  url                 = {https://doi.org/10.1007/s11222-019-09894-w},
  fjournal            = {Statistics and Computing},
  mrclass             = {65C05 (62F15)},
  mrnumber            = {4026666},
  mrreviewer          = {Adam\ M.\ Johansen},
}

@article{2020SciPy-NMeth,
  title               = {{{SciPy} 1.0: Fundamental Algorithms for Scientific Computing in {P}ython}},
  author              = {Virtanen, Pauli and Gommers, Ralf and Oliphant, Travis E. and Haberland, Matt and Reddy, Tyler and Cournapeau, David and Burovski, Evgeni and Peterson, Pearu and Weckesser, Warren and Bright, Jonathan and {van der Walt}, St{\'e}fan J. and Brett, Matthew and Wilson, Joshua and Millman, K. Jarrod and Mayorov, Nikolay and Nelson, Andrew R. J. and Jones, Eric and Kern, Robert and Larson, Eric and Carey, C J and Polat, {\.I}lhan and Feng, Yu and Moore, Eric W. and {VanderPlas}, Jake and Laxalde, Denis and Perktold, Josef and Cimrman, Robert and Henriksen, Ian and Quintero, E. A. and Harris, Charles R. and Archibald, Anne M. and Ribeiro, Ant{\^o}nio H. and Pedregosa, Fabian and {van Mulbregt}, Paul and {SciPy 1.0 Contributors}},
  year                = {2020},
  journal             = {Nat. Methods},
  volume              = {17},
  pages               = {261--272},
  doi                 = {10.1038/s41592-019-0686-2},
  adsurl              = {https://rdcu.be/b08Wh},
}

@inproceedings{borovitskiy2020matern,
  title               = {Mat\'{e}rn {G}aussian Processes on {R}iemannian Manifolds},
  author              = {Borovitskiy, Viacheslav and Terenin, Alexander and Mostowsky, Peter and Deisenroth, Marc},
  year                = {2020},
  booktitle           = {Advances in Neural Information Processing Systems},
  publisher           = {Curran Associates, Inc.},
  volume              = {33},
  pages               = {12426--12437},
  url                 = {https://proceedings.neurips.cc/paper\%5Ffiles/paper/2020/file/92bf5e6240737e0326ea59846a83e076-Paper.pdf},
  editor              = {H. Larochelle and M. Ranzato and R. Hadsell and M.F. Balcan and H. Lin},
}

@article{de2021reproducing,
  title               = {Reproducing kernel {H}ilbert spaces on manifolds: {S}obolev and diffusion spaces},
  author              = {De Vito, Ernesto and M\"ucke, Nicole and Rosasco, Lorenzo},
  year                = {2021},
  journal             = {Anal. Appl. (Singap.)},
  volume              = {19},
  number              = {3},
  pages               = {363--396},
  doi                 = {10.1142/S0219530520400114},
  issn                = {0219-5305,1793-6861},
  url                 = {https://doi.org/10.1142/S0219530520400114},
  fjournal            = {Analysis and Applications},
  mrclass             = {46E22 (46E35 58C25)},
  mrnumber            = {4233705},
  mrreviewer          = {William\ E.\ Gryc},
}

@misc{guo2025eigen,
  title               = {Eigen, singular, cosine-sine, and {A}utonne--{T}akagi vectors distributions of random matrix ensembles},
  author              = {Guo, Yihan and Lim, Lek-Heng},
  year                = {2025},
  howpublished        = {arXiv preprint arXiv:2512.12766},
  url                 = {https://arxiv.org/abs/2512.12766},
  eprint              = {2512.12766},
}

@article{LaiLimYe2025EDDManifoldOptimization,
  title               = {Euclidean distance degree in manifold optimization},
  author              = {Lai, Zehua and Lim, Lek-Heng and Ye, Ke},
  year                = {2025},
  journal             = {SIAM J. Optim.},
  volume              = {35},
  number              = {4},
  pages               = {2402--2422},
  doi                 = {10.1137/25M1735032},
  issn                = {1052-6234,1095-7189},
  url                 = {https://doi.org/10.1137/25M1735032},
  fjournal            = {SIAM Journal on Optimization},
  mrclass             = {14M15 (15A24 58C05 58K05 90C26)},
  mrnumber            = {4976079},
}

@article{LimYe2024MinimalEquivariant,
  title               = {Simple matrix expressions for the curvatures of {G}rassmannian},
  author              = {Lai, Zehua and Lim, Lek-Heng and Ye, Ke},
  year                = {2026},
  journal             = {Found. Comput. Math.},
  volume              = {26},
  pages               = {2053--2089},
  doi                 = {10.1007/s10208-025-09723-9},
  isbn                = {1615-3383},
  url                 = {https://doi.org/10.1007/s10208-025-09723-9},
  date                = {2025/08/27},
  id                  = {Lai2025},
}

@article{LimLuYe2025ProductsGrassmannians,
  title               = {Special {O}rthogonal, {S}pecial {U}nitary, and {S}ymplectic {G}roups as {P}roducts of {G}rassmannians},
  author              = {Lim, Lek-Heng and Lu, Xiang and Ye, Ke},
  year                = {2026},
  journal             = {SIAM J. Matrix Anal. Appl.},
  volume              = {47},
  number              = {2},
  pages               = {954--975},
  doi                 = {10.1137/25M1748305},
  issn                = {0895-4798,1095-7162},
  url                 = {https://doi.org/10.1137/25M1748305},
  fjournal            = {SIAM Journal on Matrix Analysis and Applications},
  mrclass             = {14M15 (15A23 22E10 22E15 51A50)},
  mrnumber            = {5090591},
}

\end{document}